\documentclass[12pt,leqeq]{article}
\usepackage{amssymb,amsthm}
\usepackage{amsmath}
\usepackage{amscd}
\usepackage{graphicx}
\usepackage{subfigure}
\usepackage{epsf,graphicx}
\usepackage{epstopdf}
\setlength{\topmargin}{-0.2in} \setlength{\oddsidemargin}{0.2in}
\setlength{\textwidth}{6.25in}
\setlength{\textheight}{8.5in}
\setlength{\unitlength}{0.6pt}
\newtheorem{thm}{Theorem}
\newtheorem{lema}[thm]{Lemma}
\newtheorem{prop}[thm]{Proposition} 

\newtheorem{defi}[thm]{Definition}
\newtheorem{exa}[thm]{Example}

\setlength{\baselineskip}{16pt}

\begin{document}
\date{}

\title{On  the Combinatorics  of  the Universal Enveloping Algebra $\ \widehat{U}_h(\mathfrak{sl}_2)$ }
\author{Rafael D\' iaz \ \ and \ \ Edward Salamanca}
\maketitle

\begin{abstract}
We study using combinatorial methods the structural coefficients of the formal homogeneous universal enveloping algebra $\ \widehat{U}_h(\mathfrak{sl}_2)\ $  of   the special linear algebra $\ \mathfrak{sl}_2,\ $ over a field of characteristic zero. We provide explicit formulae for the product of generic elements in $\ \widehat{U}_h(\mathfrak{sl}_2),\ $ and construct combinatorial objects  giving flesh to  these formulae.

\end{abstract}

\section{Introduction and Basic Notions}

When studying an algebra $\ A \ $ we usually approach it from a couple of complementary viewpoints. On the one hand we study $\ A \ $ by given generators for it, generators for the relations among generators, generators for the relations among relations, and so on, i.e. we study $\ A \ $ by analyzing a free resolution for it. This approach was pioneered by Hilbert. On the other hand we study $\ A \ $ by finding a  basis $\ \{e_i\} \ $ for $\ A, \ $ and studying the structural coefficients $\ c_{ij}^k\ $ defined by the equation $\ \ \displaystyle e_i e_j \ = \ \sum_{k}c_{ij}^k e_k. \ \ $ Note that we do not demand that $\ A \ $ be finite dimensional, but do insist that the sum above be finite; otherwise further topological considerations have to be imposed. For example, one says that $A$ is a combinatorial (resp. integral, rational) algebra if it admits a combinatorial (resp. integral, rational) basis, i.e. a basis such that the associated structural coefficients are such that $\  c_{ij}^k \in \mathbb{N} \ $ (resp. $c_{ij}^k \in  \mathbb{Z},  \   c_{ij}^k \in \mathbb{Q}.) $ Analogously, for  $\ k = \mathbb{R}, \ $ we say that $\ A \ $ is a probabilistic algebra, if it admits a basis such that the associated structural coefficients satisfy $ \ \ c_{ij}^k \geq 0 \ \  \mbox{and}  \ \ \sum_{k}c_{ij}^k = 1. \ \ $ This approach, strongly promoted by Rota \cite{ro},  has been undertaken by many combinatorialist, for example, in the study of combinatorial Hopf algebras \cite{nb, gri, lo, sch}.\\

We assume the reader to be familiar with the rudiments of the theory of Lie algebras \cite{bak, re3, dixm, erd, fult, hum, jac, kass}.  Fix a field $\ k \ $ of characteristic zero.  Given a Lie algebra  $\ L \ $  over $\ k, \ $ we let $\ T(L)\ $ be the free $\ k$-algebra generated by $\ L, \ $ and  $\ U(L)\ = \ T(L)/I(L) \ $ be the universal enveloping algebra of $\ L, \ $ where  $\ I(L)\ $ is the two-sided ideal of  $\ T(L)\ $ generated by elements of the form $\ \ xy\ -\ yx\ -\ [x,y] \ \  \mbox{for}  \ \ x,y \in L.$\\

Consider the Lie algebra $\  \mathrm{M}_{2\times 2} (k)\ $ of square matrices of size two with entries in $k$, and its
Lie sub-algebra $\ \mathfrak{sl}_2  =    \lbrace A \in \mathrm{M}_{2\times 2} (k) \ \vert \ \mathrm{tr} (A)=0  \rbrace \ $ consisting of trace zero matrices. The universal enveloping algebra $\ U(\mathfrak{sl}_2)\ $ is given by $$U(\mathfrak{sl}_2)\ \ = \ \ T(\mathfrak{sl}_2)/I(\mathfrak{sl}_2)\  \ = \ \ \frac{k<x,y,z>}{I},$$ i.e.
$\ U(\mathfrak{sl}_2)\ $ is the quotient of the free algebra $\ k<x,y,z> \ $ generated by $\ x, \ y \ $ and $\ z $ by the ideal $\ I\ $ generated by the identities:
$$yx \  =  \ xy+2x, \ \ \ \ \ \ \ \ zx \  =  \ xz-y, \ \ \ \ \ \ \ \ zy \  =  \ yz+2z.$$

Pick a variable $\ h \ $ algebraically independent from $\ x,y,z. \ $ The homogeneous universal enveloping algebra $\ U_h(\mathfrak{sl}_2)\ $ is given by $$U_h(\mathfrak{sl}_2)\ \ \ = \ \ \ \frac{k<x,y,z>[h]}{I_h} ,$$ i.e. $\ U_h(\mathfrak{sl}_2)\ $ is the free associative $\ k$-algebra generated by the letters $\ x, y, z\ $ and $\ h, \ $ divided by the relations:
$$yx\  =  \ xy+2xh, \ \ \ \ \ \ \ \ zx  \ =  \ xz-yh, \ \ \ \ \ \ \ \ zy \ = \ yz+2zh,$$
and $\ h \  $ commutes with $\ x,  y,    z. \ $ The methods employed in this work apply for homogeneous universal enveloping algebra $\ U_h(\mathfrak{sl}_2), \ $  nevertheless we
are going to develop our results for the formal homogeneous universal enveloping algebra $\ \widehat{U}_h(\mathfrak{sl}_2)\ $  given by
$$\widehat{U}_h(\mathfrak{sl}_2)\ \ \ = \ \ \ \frac{k<<x,y,z>>[[h]]}{I_h} ,$$ i.e. we allow formal power series in the variables $\ x, y, z, h \ $ subject to the same relations generating the ideal $\ I_h \ $ from above.\\

Fix the order $\ x < y < z < h\ $ on the formal generators of $\ \widehat{U}_h(\mathfrak{sl_2}). \ $ A monomial in $\ x, y,z,  h \ $ is in normal form if it looks like $\ x^ay^bz^ch^d.\ $ By the Poincar\'e-Birkhoff-Witt theorem \cite{dixm, hum} all elements of $\ \widehat{U}_h(\mathfrak{sl_2}) \ $ can be written in a unique way as a formal sum of monomials in normal form, i.e. a generic element of $\ \widehat{U}_h(\mathfrak{sl_2}) \ $ can be written as
$$f \ \  =  \ \ \sum_{a,b,c,d \in \mathbb{N}} f_{a,b,c,d}\frac{x^ay^bz^ch^d}{a!b!c!d!}.$$
Note that we use divided power monomials, i.e. monomials divided by the corresponding factorials.  This is the main reason why we work with a characteristic zero field. The basis of divided power monomials is a priory a rational basis; a main result of this work is to
show that it is actually an integral basis, i.e. we show that the product of formal power series in normal form with divided power monomials can be written in normal form with divided power monomials  through a sophisticated procedure, which we are going to dissect first from an algebraic viewpoint, and then adopting a combinatorial viewpoint. The reader may take a look at Figure \ref{f4} which, as we shall argue, summarizes the combinatorics behind $\ \widehat{U}_h(\mathfrak{sl_2}) \ $ arising from the basis of divided power monomials written in the prescribed order.  Working with divided powers makes our combinatorial constructions more transparent, nevertheless we stress that they can be adapted to work with undivided monomials as well.

\section{Explicit Formulae for the Product on $\ \widehat{U}_{h}(\mathfrak{sl}_2)$}

In this section we provide explicit formulae for the product on $\ \widehat{U}_{h}(\mathfrak{sl}_2). \ $ Before attacking our problem we first introduced the required notation. \\

For $n \in \mathbb{N}$ we set $\ [n]=\{1,...,n \}\ $ for $\ n\geq 1,\ $
and $\ [0]=\emptyset.\ $ If $\ x \ $ is a finite set we let $\ |x| \ $ be its cardinality and set $\ [z]=[|z|].$\\

For $\ s,n \in \mathbb{N}\ $ with $\ 0\leq s\leq n, \ $ we let the elementary symmetric function $\ e^{s}_n\ $ on the variables $\ x_1,...,x_n\ $ be given by $$e^{s}_{n}(x_1,...,x_n)\ \ =  \  \ \displaystyle \sum_{1\leq i_1 <...<i_s \leq n}^{} \;x_{i_{1}}...x_{i_{s}}\ \  =   \ \ \underset{\underset{|A|=s}{A\subseteq [n]}}{\sum} \  \prod_{i\in A}\:x_i .$$
For $\ a \in \mathbb{N} \ $ and $\ 0\leq s\leq n, \ $  we set  $\ \ \displaystyle (a)^s_n\ = \ e^{s}_{n}(a,a+1,...,a+n-1).\ $
For $\ 1\leq s\leq n, \ $ the symbol $\ (a)^s_n \ $ satisfies the recursion:
$$(a)^s_n \ \ = \ \ (a)^{s}_{n-1} \ + \ (a+n-1)(a)^{s-1}_{n-1},$$
as well as  the boundary conditions
$$(a)^s_n \ = \ 0, \ \ \ \mbox{for}\ \ \ s > n,\ \ \ \ \ \ \mbox{and}\ \ \ \ \ \ (a)^0_n \ = \ 1, \ \ \ \mbox{for} \ \ \ n \geq 0. $$
For example, for $\ n=4\ $ and $\ s=3,\ $ we have that
$$\ \ (a)^{3}_{4}\ \ = \ \ e^{3}_{4}(a,a+1,a+2,a+3) \ \  =  \ \ 4a^{3}\ + \ 18a^{2}\ + \ 22a\ +\ 6.$$
\begin{figure}[t]
  \centering
  \includegraphics[width=4cm]{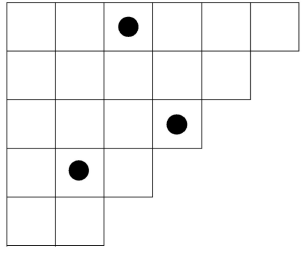}\label{f1}
  \caption{Configuration contributing to $\ (2)^3_5.$}
\end{figure}A combinatorial interpretation for $\ (a)^s_n \ $ in terms of tableaux goes as follows: $\ (a)^s_n \ $ counts the number of
ways of inserting $\ s \ $ unlabeled dots into a tableaux with rows of length $\ (a +n-1,a+n-2,...,a+1,a),\ $ with at most one dot in each row.  Figure \ref{f1} shows an example of a configuration contributing to the computation of $\ (2)^3_5 .\ $\\

In the proof of  Proposition \ref{bd} below we use the Pochhammer
$\ k$-symbol \cite{re4, mu} which is given for $\ a,k \ $ in some algebra $\ A\ $ and  $\ n \in \mathbb{N}_{>0}\ $  by
$$(a)_{n,k}\  =  \ a(a+k)(a+2k)\ .\ .\ . \ (a+(n-1)k) .$$
We let $\ (a)_{n} =  a(a-1)(a-2)\ .\ . \ . \ (a-n+1) \ $ be the falling factorial for $ \ a,n \in \mathbb{N}. \ $
Note that $\ (a)_n = (a)_{n,-1}.$ \\

We are also going to use the following identities.

\begin{lema}\label{vam}
{\em For $\ a,h \ $ in an algebra $\ A, \ $ and $\ n \in \mathbb{N}_{>0}\ $ we have that:
\begin{enumerate}
  \item $ (a)_{n,-h}\  =  \ \big(a-(n-1)h\big)(a)_{n-1,-h} .$
  \item $ (a)_{n,-h}\  =  \  a\big(a-h\big)_{n-1,-h}. $
 \item $\displaystyle \bigg(y+\big(a-(n+1)\big)h\bigg)_{n,-h}\ =  \ \sum_{w=0}^{n}\big(a-2n\big)_{n}^{w}y^{n-w}h^{w}.$
\end{enumerate}
}
\end{lema}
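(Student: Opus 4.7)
Parts (1) and (2) are purely bookkeeping. By definition,
$$(a)_{n,-h} \ = \ a(a-h)(a-2h)\cdots(a-(n-1)h).$$
For (1) I would simply factor off the last term $\ (a-(n-1)h)\ $ on the right and recognize the remaining $n-1$ factors as $(a)_{n-1,-h}$. For (2) I would instead factor off $a$ on the left and observe that the remaining $n-1$ factors are precisely $(a-h)(a-h-h)\cdots(a-h-(n-2)h)=(a-h)_{n-1,-h}$. Nothing more than rereading the definition is required.

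The substance is in (3). The plan is to first unwind the Pochhammer $k$-symbol into an explicit product, then identify the coefficient of $y^{n-w}h^w$ as an elementary symmetric function, and finally recognize that function as $(a-2n)^w_n$ via the definition $(a)^s_n=e^s_n(a,a+1,\ldots,a+n-1)$. Concretely, set $b=a-(n+1)$ so that by definition
$$\bigl(y+bh\bigr)_{n,-h} \ = \ \prod_{i=0}^{n-1}\bigl(y+(b-i)h\bigr).$$
As $i$ runs over $0,1,\ldots,n-1$, the integer $b-i=a-(n+1)-i$ runs over the consecutive values $a-n-1,a-n-2,\ldots,a-2n$; reindexing so the values appear in increasing order turns the product into
$$\prod_{j=0}^{n-1}\bigl(y+(a-2n+j)h\bigr).$$

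Now I would expand this product by, for each factor, choosing either the $y$ term or the $(a-2n+j)h$ term. Collecting according to the number $w$ of factors contributing $h$, the coefficient of $y^{n-w}h^w$ is
$$\sum_{\substack{A\subseteq\{0,\ldots,n-1\}\\|A|=w}}\prod_{j\in A}(a-2n+j)\ = \ e^w_n(a-2n,a-2n+1,\ldots,a-n-1),$$
which is $(a-2n)^w_n$ by the very definition of $(a)^s_n$ given earlier in the paper. Summing over $w$ from $0$ to $n$ delivers the claimed identity.

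The only real obstacle is keeping the indexing straight in (3): one must verify carefully that the $n$ shift-values $b-i$ land precisely on the block $\{a-2n,\ldots,a-n-1\}$ of consecutive integers that appears inside $(a-2n)^w_n$. Once this match is made, the expansion of the product and the definition of $e^w_n$ do the rest.
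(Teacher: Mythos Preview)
Your proposal is correct and follows essentially the same route as the paper: parts (1) and (2) are dismissed as immediate from the definition, and for (3) both you and the paper expand $(y+(a-(n+1))h)_{n,-h}$ as the product $\prod_{i=0}^{n-1}(y+(a-(n+1+i))h)$, then read off the coefficient of $y^{n-w}h^w$ as the elementary symmetric function $e^w_n$ evaluated at $a-2n,\ldots,a-n-1$, which is $(a-2n)^w_n$ by definition. Your reindexing step making the consecutive block explicit is slightly more detailed than the paper's argument, but the underlying idea is identical.
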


\begin{proof}Properties 1 and 2 follow from  definition. To show  3 note that
$$\bigg(y+\big(a-(n+1)\big)h\bigg)_{n,-h} \  =  \  \prod_{i=0}^{n-1}\bigg[y \ + \ \bigg( a \ - \ \big(n+i+1 \big)  \bigg)h \bigg].$$
Developing the latter product  assume we choose  $\ w \in [n] \ $ times a $h$-term   and thus $\ n-w \ $ times a $\ y$-term, this choice gives rise just to the sum $\ \sum_{w=0}^{n}\ $ and the factor $\ y^{n-w}h^{w}\ $ in our proposed formula. In order to get the factor $\ h^w\ $ we must have chosen and multiplied $\ w \ $ elements from the set
$$\big(a - (n+1)\big), \  \ \big(a - (n+2)\big), \ \dots ,  \ \  \big(a - (2n)\big), $$ which agrees, by definition, with the result of computing
$\ (a-2n\big)_{n}^{w}.$
\end{proof}

Next proposition forms the basis upon which all further results in this work are built. Related identities are given by  Kac  \cite{kac},
and by D\'iaz and Pariguan \cite{re}.

\begin{prop}\label{bd}
{\em
For $\ a,b \in \mathbb{N}, \ $ the following identities hold in $\ \widehat{U}_h(sl_2)$:
\begin{enumerate}
\item{$\displaystyle \frac{z^{a}}{a!}\frac{y^{b}}{b!}\ \ = \ \ \displaystyle \sum_{k=0}^{b} \; (2a)^{k}\frac{y^{b-k}}{(b-k)!}\frac{z^{a}}{a!}\frac{h^{k}}{k!}.$}
\item{$\displaystyle  \frac{y^{a}}{a!}\frac{x^{b}}{b!}\ \ = \ \ \displaystyle \sum_{k=0}^{a} \; {(2b)^k }\frac{x^{b}}{b!}\frac{y^{a-k}}{(a-k)!}\frac{h^{k}}{k!}.$}
\item{$\displaystyle  \frac{z^{a}}{a!}\frac{x^{b}}{b!}\ \ = $
 $$ \sum_{0\leq w \leq v \leq \mathrm{min}(a,b)} \;(-1)^{v}(v-w)!(v+w)_{w}(a+b-2v)^{w}_{v}\frac{x^{b-v}}{(b-v)!} \frac{y^{v-w}}{(v-w)!}\frac{z^{a-v}}{(a-v)!}\frac{h^{v+w}}{(v+w)!}.$$}
\end{enumerate}
}
\end{prop}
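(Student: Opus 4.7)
The plan is to prove parts 1 and 2 as binomial expansions after iterating the appropriate commutation relation, and to attack part 3 by first establishing a compact intermediate identity in terms of the Pochhammer $k$-symbol and then unfolding it via Lemma \ref{vam}.3.

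For part 1, iteration of $zy = yz + 2zh$ gives $z^a y = (y+2ah)\,z^a$ and hence $z^a y^b = (y+2ah)^b\,z^a$; expanding by the binomial theorem and collecting factorials yields the statement. Part 2 is parallel: from $yx = x(y+2h)$ one obtains $y^a x^b = x^b(y+2bh)^a$ and expands.

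For part 3 the plan has three stages. First, by a short induction on $a$, I establish the auxiliary commutation $z^a x = x\,z^a - a(y+(a-1)h)\,z^{a-1}\,h$, together with the $x$-shift rule $(y+\alpha h)\cdot x = x\cdot(y+(\alpha+2)h)$, which iterates to $(y+\alpha h)_{v,-h}\cdot x = x\cdot(y+(\alpha+2)h)_{v,-h}$. Second, I prove by induction on $b$ the compact intermediate identity
\[
\frac{z^a x^b}{a!\,b!} \ =\ \sum_{v=0}^{\min(a,b)} \frac{(-1)^v}{v!}\,\frac{x^{b-v}}{(b-v)!}\,(y+(a+b-v-1)h)_{v,-h}\,\frac{z^{a-v}}{(a-v)!}\,h^v,
\]
whose right-hand side I denote $F(a,b)$. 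In the inductive step I verify $F(a,b)\cdot x = (b+1)F(a,b+1)$: each summand splits into an ``$A$-piece'' arising from $x\,z^{a-v}$ (contributing at $h^v$) and a ``$B$-piece'' arising from the correction $-(a-v)(y+(a-v-1)h)\,z^{a-v-1}\,h$ (contributing at $h^{v+1}$). On the $A$-piece I apply the Pochhammer-increment identity $(y+\alpha h)_{v,-h} = (y+(\alpha-1)h)_{v,-h} + vh\,(y+(\alpha-1)h)_{v-1,-h}$, which is a direct consequence of Lemma \ref{vam}.1 and 2, to realign the shift with that required by $F(a, b+1)$. On the $B$-piece I apply Lemma \ref{vam}.1 in reverse to absorb the trailing factor $(y+(a-v-1)h)$ into $(y+(a+b-v-1)h)_{v+1,-h}$, at the cost of a correction $(b-v)h(y+(a+b-v-1)h)_{v,-h}$. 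The principal contributions of $A$ and $B$ combine to give $(b+1-v)+v = b+1$ copies of the $v$-th term of $F(a, b+1)$, while the increment-correction from index $v$ of type $A$ and the absorption-correction from index $v-1$ of type $B$ carry opposite signs and cancel pairwise. Third, Lemma \ref{vam}.3 with $a\mapsto a+b$ and $n\mapsto v$ unfolds $(y+(a+b-v-1)h)_{v,-h} = \sum_{w=0}^{v} (a+b-2v)^w_v\,y^{v-w} h^w$, and repackaging in divided-power form, using $(v+w)_w = (v+w)!/v!$, produces the coefficient $(-1)^v(v-w)!(v+w)_w(a+b-2v)^w_v$ stated in the proposition.

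The main obstacle is the bookkeeping in the inductive step $F(a,b)\cdot x = (b+1)F(a,b+1)$: one must pin down precisely which correction cancels against which, and the cancellation occurs only after reindexing so that the $v$-th correction of type $A$ pairs against the $(v-1)$-st correction of type $B$. The Pochhammer formulation of the intermediate identity is essential here; attempting the induction directly on the fully expanded formula of part 3 would entangle the combinatorics of $(a+b-2v)^w_v$ with the commutations $[z,x]=-yh$ and $[y,x]=2xh$, and would obscure the telescoping that makes the argument work.
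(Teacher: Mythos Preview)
Your proposal is correct and follows the same essential strategy as the paper: reduce part~3 to the compact Pochhammer identity
\[
\frac{z^{a}}{a!}\frac{x^{b}}{b!}\;=\;\sum_{v=0}^{\min(a,b)}(-1)^{v}\frac{x^{b-v}}{(b-v)!}\bigl(y+(a+b-v-1)h\bigr)_{v,-h}\frac{z^{a-v}}{(a-v)!}\frac{h^{v}}{v!},
\]
prove that by induction, and then unfold with Lemma~\ref{vam}.3. The differences are tactical rather than conceptual. For parts~1 and~2 you use the one-line observation $z^{a}y^{b}=(y+2ah)^{b}z^{a}$ (respectively $y^{a}x^{b}=x^{b}(y+2bh)^{a}$) followed by the binomial theorem, which is a bit slicker than the paper's explicit induction on $b$ and its appeal to the automorphism $x\leftrightarrow z,\ y\mapsto -y$ for part~2. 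For part~3 the paper establishes the base $z\cdot\frac{x^{b}}{b!}$ and inducts on $a$, whereas you establish $z^{a}\cdot x$ and induct on $b$; these are dual computations, and your increment/absorption cancellation between the $A$-correction at index $v$ and the $B$-correction at index $v-1$ is exactly the mirror of the paper's common-factor extraction after its shift $v\to v+1$. Either route yields the intermediate identity, and both buy the same thing: a telescoping structure that would be invisible in the fully expanded $(a+b-2v)^{w}_{v}$ form.
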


\begin{proof}We show 1 and 3, item  2 follows from 1 using the automorphism  of $\ \widehat{U}_h(\mathfrak{sl}_2) \ $ given on generators by
$$x \ \rightarrow  \ z, \ \ \ \ \ \ \  y \ \rightarrow \ -y, \ \ \ \ \ \ \ z \ \rightarrow \ x, \ \ \ \ \ \ \  \mbox{and} \ \ \ \ \ \ \  h \ \rightarrow \ h.$$
We show item 1 by induction. First we check that $$\ \frac{z^a}{a!}y \ \ = \ \ y\frac{z^{a}}{a!}\ + \ 2a\frac{z^{a}}{a!}h. \ $$ For $\ a=1\ $ we get a defining identity for $\ \widehat{U}_h(\mathfrak{sl}_2). \ $ For
$\ a>0 \ $ we have that:
$$\frac{z^{a+1}}{(a+1)!}y \ \ \ = \ \ \ \frac{z}{(a+1)}\left( \frac{z^{a}}{a!}y \right)
\ \ \ = \ \ \ \frac{z}{(a+1)}\left( y \frac{z^{a}}{a!}\ + \ 2a \frac{z^{a}}{a!}h \right) \ \ \ = $$
$$\frac{1}{(a+1)}\left( yz\ + \ 2zh \right)\frac{z^{a}}{a!}\ \ + \ \ 2a\frac{z^{a+1}}{(a+1)!}h \ \ \ = \ \ \ y\frac{z^{a+1}}{(a+1)!}\ \ \ + \ \ \ 2(a+1)\frac{z^{a+1}}{(a+1)!}h.$$
We proceed by induction on $\ b. \ $ We have that:
$$\frac{z^{a}}{a!}\frac{y^{b+1}}{(b+1)!} \ \ \ = \ \ \
\left( \frac{z^{a}}{a!}\frac{y^{b}}{b!} \right)\frac{y}{(b+1)} \ \ \ = \ \ \
\sum_{k=0}^{b} \;\frac{(2a)^{k}}{(b+1)}\frac{y^{b-k}}{(b-k)!}\left( \frac{z^{a}}{a!}y \right) \frac{h^{k}}{k!} \ \ \ = $$
$$\sum_{k=0}^{b} \;\frac{(2a)^{k}}{(b+1)}\frac{y^{b-k}}{(b-k)!}\left( y \frac{z^{a}}{a!} \ \ + \ \ (2a)\frac{z^{a}}{a!}h \right) \frac{h^{k}}{k!} \ \ \ = $$
$$\sum_{k=0}^{b} \;\frac{(2a)^{k}}{(b+1)}\frac{y^{b+1-k}}{(b-k)!}\frac{z^{a}}{a!}\frac{h^{k}}{k!} \ \  +  \ \  \sum_{k=0}^{b} \frac{(2a)^{k+1}}{(b+1)}\frac{y^{b-k}}{(b-k)!}\frac{z^{a}}{a!}\frac{h^{k+1}}{k!}\ \ \ = $$
$$\frac{y^{b+1}}{(b+1)!}\frac{z^{a}}{a!}\ \ \ + \ \ \ \sum_{k=1}^{b} \;\frac{(2a)^{k}}{(b+1)}\frac{y^{b+1-k}}{(b-k)!}\frac{z^{a}}{a!}\frac{h^{k}}{k!}
\ \ \ + $$
$$\sum_{k=0}^{b-1}  \frac{(2a)^{k+1}}{(b+1)}\frac{y^{b-k}}{(b-k)!}\frac{z^{a}}{a!}\frac{h^{k+1}}{k!}
\ \ \ + \ \ \ (2a)^{b+1}\frac{z^{a}}{a!}\frac{h^{b+1}}{(b+1)!} \ \ \ = $$
$$\frac{y^{b+1}}{(b+1)!}\frac{z^{a}}{a!}\ \ \ + \ \ \ \sum_{k=1}^{b} \;(2a)^{k}  \frac{y^{b+1-k}}{(b+1-k)!}\frac{z^{a}}{a!}\frac{h^{k}}{k!}\ \ \  + \ \ \ (2a)^{b+1}\frac{z^{a}}{a!}\frac{h^{b+1}}{(b+1)!} \ \ \ = $$
$$\sum_{k=0}^{b+1} \;(2a)^{k}\frac{y^{b+1-k}}{(b+1-k)!}\frac{z^{a}}{a!}\frac{h^{k}}{k!}.$$
Next we show item 3. From Lemma \ref{vam}  we have that
$$\bigg(y+\big(a+b-(v+1)\big)h\bigg)_{v,-h}\ \ = \ \ \sum_{w=0}^{v} \big(a+b-2v\big)_{v}^{w}y^{v-w}h^{w}.$$
Thus our desired identity is equivalent to
$$\frac{z^{a}}{a!}\frac{x^{b}}{b!}\ \ \ = \ \ \ \sum_{v=0}^{\mathrm{min}(a,b)} \;(-1)^{v}\frac{x^{b-v}}{(b-v)!}\bigg(y+\big(a+b-(v+1)\big)h\bigg)_{v,-h}\frac{z^{a-v}}{(a-v)!}\frac{h^{v}}{v!}.$$
By induction on $\ b \ $ one shows that:
$$z \frac{x^{b}}{b!} \ \  =  \ \ \frac{x^{b}}{b!}z \ \ - \ \ \frac{x^{b-1}}{(b-1)!}\big( y+(b-1)h \big)h.$$
We proceed from this formula by induction on $\ a$.
\begin{footnotesize}
$$	 \frac{z^{a+1}}{(a+1)!}\frac{x^{b}}{b!} \ \ \ =  \ \ \ \frac{z}{(a+1)} \left(\frac{z^{a}}{a!} \frac{x^{b}}{b!} \right)\ \ \ = $$
$$  \sum_{v=0}^{\mathrm{min}(a,b)} \frac{(-1)^{v}}{(a+1)} \left( z\frac{x^{b-v}}{(b-v)!} \right) \bigg( y+\big(a+b-v-1\big)h \bigg)_{v,-h} \frac{z^{a-v}}{(a-v)!}\frac{h^{v}}{v!} \ \ \ = $$
$$  \sum_{v=0}^{\mathrm{min}(a,b)} \frac{(-1)^{v}}{(a+1)} \left( \frac{x^{b-v}}{(b-v)!}z\ - \ \frac{x^{b-v-1}}{(b-v-1)!} \big( y+(b-v-1)h \big)h \right)
\bigg( y+\big(a+b-v-1\big)h \bigg)_{v,-h} \frac{z^{a-v}}{(a-v)!}\frac{h^{v}}{v!} \ \ \ =$$
$$\sum_{v=0}^{\mathrm{min}(a,b)}\frac{(-1)^{v}}{(a+1)} \frac{x^{b-v}}{(b-v)!}  z\bigg( y+\big(a+b-v-1\big)h \bigg)_{v,-h}  \frac{z^{a-v}}{(a-v)!}\frac{h^{v}}{v!} \ \ \ + $$
$$\sum_{v=0}^{\mathrm{min}(a,b)}\frac{(-1)^{v+1}}{(a+1)} \big(v+1\big)\frac{x^{b-v-1}}{(b-v-1)!} \bigg( y+\big(b-v-1\big)h \bigg)
\bigg( y+\big(a+b-v-1\big)h \bigg)_{v,-h}\frac{z^{a-v}}{(a-v)!}\frac{h^{v+1}}{(v+1)!}\ \ \ = $$
$$ \sum_{v=0}^{\mathrm{min}(a,b)}\frac{(-1)^{v}}{(a+1)} \frac{x^{b-v}}{(b-v)!}  \bigg( y+\big(a+b-v+1\big)h \bigg)_{v,-h} z\frac{z^{a-v}}{(a-v)!}\frac{h^{v}}{v!}\ \ \ + $$
$$\sum_{v=0}^{\mathrm{min}(a,b)}\frac{(-1)^{v+1}}{(a+1)} \big(v+1\big)\frac{x^{b-v-1}}{(b-v-1)!} \bigg( y+\big(b-v-1\big)h \bigg)
\bigg( y+\big(a+b-v-1\big)h \bigg)_{v,-h}\frac{z^{a-v}}{(a-v)!}\frac{h^{v+1}}{(v+1)!} \ \ =$$
$$ \sum_{v=0}^{\mathrm{min}(a,b)}\frac{(-1)^{v}}{(a+1)}\big(a+1-v\big) \frac{x^{b-v}}{(b-v)!} \bigg( y+\big(a+b-v+1\big)h \bigg)_{v,-h} \frac{z^{a+1-v}}{(a+1-v)!}\frac{h^{v}}{v!}\ \ \ +$$
$$\sum_{v=0}^{\mathrm{min}(a,b)}\frac{(-1)^{v+1}}{(a+1)} \big(v+1\big)\frac{x^{b-v-1}}{(b-v-1)!} \bigg( y+\big(b-v-1\big)h \bigg)
\bigg( y+\big(a+b-v-1\big)h \bigg)_{v,-h}\frac{z^{a-v}}{(a-v)!}\frac{h^{v+1}}{(v+1)!}.$$
\end{footnotesize}
Making the change $\ v \rightarrow v+1 \ $ we get that $\ \frac{z^{a+1}}{(a+1)!}\frac{x^{b}}{b!}\ $ is equal to:
\begin{footnotesize}
$$ \sum_{v=0}^{\mathrm{min}(a,b)}\frac{(-1)^{v}}{(a+1)}\big(a+1-v\big) \frac{x^{b-v}}{(b-v)!}  \bigg( y+\big(a+b-v+1\big)h \bigg)_{v,-h} \frac{z^{a+1-v}}{(a+1-v)!}\frac{h^{v}}{v!}\ \ \ +$$
$$\sum_{v=1}^{\mathrm{min}(a+1,b)}\frac{(-1)^{v}}{(a+1)} v\frac{x^{b-v}}{(b-v)!} \bigg( y+\big(b-v\big)h \bigg)
\bigg( y+\big(a+b-v\big)h \bigg)_{v-1,-h}\frac{z^{a+1-v}}{(a+1-v)!}\frac{h^{v}}{v!} \ \ \ = $$
$$\frac{x^{b}}{b!}\frac{z^{a+1}}{(a+1)!} \ \ \ + $$
$$\sum_{v=1}^{\mathrm{min}(a,b)}\frac{(-1)^{v}}{(a+1)}\big(a+1-v\big) \frac{x^{b-v}}{(b-v)!} \bigg( y+\big(a+b-v+1\big)h \bigg)_{v,-h} \frac{z^{a+1-v}}{(a+1-v)!}\frac{h^{v}}{v!} \ \ \ +$$
$$\sum_{v=1}^{\mathrm{min}(a+1,b)}\frac{(-1)^{v}}{(a+1)} v\frac{x^{b-v}}{(b-v)!} \bigg( y+\big(b-v\big)h \bigg)
\bigg( y+\big(a+b-v\big)h \bigg)_{v-1,-h}\frac{z^{a+1-v}}{(a+1-v)!}\frac{h^{v}}{v!} \ \ \ = $$
$$ \frac{x^{b}}{b!}\frac{z^{a+1}}{(a+1)!} \ \ + \ \ (-1)^{a+1}\frac{x^{b-a-1}}{(b-a-1)!}\bigg( y+\big(b-a-1\big)h \bigg)
\bigg( y+\big(b-1\big)h \bigg)_{a,-h}\frac{h^{a+1}}{(a+1)!}\ \ \ +$$
$$\sum_{v=1}^{\mathrm{min}(a,b)}\frac{(-1)^{v}}{(a+1)}\big(a+1-v\big) \frac{x^{b-v}}{(b-v)!} \bigg( y+\big(a+b-v+1\big)h \bigg)_{v,-h} \frac{z^{a+1-v}}{(a+1-v)!}\frac{h^{v}}{v!} \ \ \ + $$
$$\sum_{v=1}^{\mathrm{min}(a,b)}\frac{(-1)^{v}}{(a+1)} v\frac{x^{b-v}}{(b-v)!} \bigg( y+\big(b-v\big)h \bigg) \bigg( y+\big(a+b-v\big)h \bigg)_{v-1,-h}\frac{z^{a+1-v}}{(a+1-v)!}\frac{h^{v}}{v!}.$$
\end{footnotesize}
We apply to the latter expression the following identities coming from Lemma \ref{vam}  $$ \bigg(y+(b-1)h \bigg)_{a+1,-h}\ \ = \ \ \bigg(y+(b-a-1)h\bigg)\bigg(y+(b-1)h \bigg)_{a,-h}, $$
$$ \bigg(y+\big(a+b-v+1\big)h\bigg)_{v,-h}\  = \  \bigg(y+\big(a+b-v+1\big)h \bigg) \bigg(y+\big(a+b-v+1\big)h-h\bigg)_{v-1,-h}, $$
to get that
\begin{footnotesize}
$$\frac{x^{b}}{b!}\frac{z^{a+1}}{(a+1)!} \ \ + \ \ (-1)^{a+1}\frac{x^{b-a-1}}{(b-a-1)!} \bigg( y+\big(b-1\big)h \bigg)_{a+1,-h}\frac{h^{a+1}}{(a+1)!}\ \ \ +$$
$$\sum_{v=1}^{\mathrm{min}(a,b)}\frac{(-1)^{v}}{(a+1)} \frac{x^{b-v}}{(b-v)!}  (a+1-v)\bigg( y+\big(a+b-v+1\big)h \bigg)\bigg( y+\big(a+b-v\big)h \bigg)_{v-1,-h} \frac{z^{a+1-v}}{(a+1-v)!}\frac{h^{v}}{v!} \ \ + $$
$$\sum_{v=1}^{\mathrm{min}(a,b)}\frac{(-1)^{v}}{(a+1)} \frac{x^{b-v}}{(b-v)!} \left( v\left( y+(b-v)h \right)\left( y+(a+b-v)h \right)_{v-1,-h} \right) \frac{z^{a+1-v}}{(a+1-v)!}\frac{h^{v}}{v!}.$$
\end{footnotesize}
Taking $\ \ \frac{x^{b-v}}{(b-v)!}, \ \ \frac{z^{a+1-v}}{(a+1-v)!}, \ \ \frac{h^{v}}{v!} \ \ \mbox{and} \  \ \bigg(  y+\big(a+b-v\big)h \bigg)_{v-1,-h} \ \  $ as a common factors, the previous expression becomes:
\begin{footnotesize}
$$\frac{x^{b}}{b!}\frac{z^{a+1}}{(a+1)!} \ \ + \ \ (-1)^{a+1}\frac{x^{b-a-1}}{(b-a-1)!} \bigg( y+\big(b-1\big)h \bigg)_{a+1,-h}\frac{h^{a+1}}{(a+1)!}\ \ \ +$$
$$\sum_{v=1}^{\mathrm{min}(a,b)}\frac{(-1)^{v}}{(a+1)} \frac{x^{b-v}}{(b-v)!} \big(a+1\big) \bigg( y+\big(a+b-2v+1\big)h \bigg)
\bigg( y+\big(a+b-v\big)h \bigg)_{v-1,-h}  \frac{z^{a+1-v}}{(a+1-v)!}\frac{h^{v}}{v!} \ \ =$$
$$\frac{x^{b}}{b!}\frac{z^{a+1}}{(a+1)!} \ \ + \ \ (-1)^{a+1}\frac{x^{b-a-1}}{(b-a-1)!} \bigg( y+\big(b-1\big)h \bigg)_{a+1,-h}\frac{h^{a+1}}{(a+1)!}\ \ +$$
$$\sum_{v=1}^{\mathrm{min}(a,b)}(-1)^{v} \frac{x^{b-v}}{(b-v)!} \bigg( y+\big(a+b-2v+1\big)h \bigg)\bigg( y+\big(a+b-v\big)h \bigg)_{v-1,-h} \frac{z^{a+1-v}}{(a+1-v)!}\frac{h^{v}}{v!}. $$
\end{footnotesize}
We apply the following identity coming from Lemma \ref{vam}
$$ \bigg(y+\big(a+b-v\big)h \bigg)_{v,-h}\ \ = \ \ \bigg(y+\big(a+b-v\big)h\bigg)_{v-1,-h}\bigg(y+\big(a+b-2v-1\big)h\bigg) $$
to  the third term above to  get that:
$$\frac{x^{b}}{b!}\frac{z^{a+1}}{(a+1)!} \ \ + \ \ (-1)^{a+1}\frac{x^{b-a-1}}{(b-a-1)!} \bigg( y+\big(b-1\big)h \bigg)_{a+1,-h}\frac{h^{a+1}}{(a+1)!}\ \ \ + $$
$$\sum_{v=1}^{\mathrm{min}(a,b)}(-1)^{v} \frac{x^{b-v}}{(b-v)!} \bigg( y+\big(a+b-v\big)h \bigg)_{v,-h} \frac{z^{a+1-v}}{(a+1-v)!}\frac{h^{v}}{v!}\ \ \ =$$
$$\sum_{v=0}^{\mathrm{min}(a+1,b)}(-1)^{v} \frac{x^{b-v}}{(b-v)!} \bigg( y+\big(a+b-v\big)h \bigg)_{v,-h} \frac{z^{a+1-v}}{(a+1-v)!}\frac{h^{v}}{v!},$$
showing the desired result.

\end{proof}

\begin{exa}{\em For $\ a=1 \ $ and $\ b=2, \ $ the identities defining $\ \widehat{U}_{h}(\mathfrak{sl}_2) \ $ imply that:
$$z\frac{x^{2}}{2!} \ \ = \ \ (zx)\frac{x}{2!} \ \ = \ \ (xz-yh)\frac{x}{2!} \ \ = \ \ \frac{x}{2!}(zx)-(yx)\frac{h}{2!} \ \ = $$
$$\frac{x}{2!}(xz-yh)-(xy+2xh)\frac{h}{2!} \ \ = \ \ \frac{x^{2}}{2!}z-\frac{1}{2!}xyh-\frac{1}{2!}xyh-2x\frac{h^{2}}{2!} \ \ = $$
$$\ \ \frac{x^{2}}{2!}z\ - \ xyh\ - \ 2x\frac{h^{2}}{2!}.$$
On the other hand, using Proposition \ref{bd} we get that:
\begin{footnotesize}
$$z\frac{x^{2}}{2!} \ \ = \ \  \sum_{0 \leq w \leq v \leq 1}^{} \;(-1)^{v}(v-w)!(v+w)_{w}(3-2v)^{w}_{v}\frac{x^{2-v}}{(2-v)!}\frac{y^{v-w}}{(v-w)!}\frac{z^{1-v}}{(1-v)!}\frac{h^{v+w}}{(v+w)!}\ \ = $$
$$(-1)^{0}(0)!(0)_{0}(3)^{0}_{0}\frac{x^{2}}{2!}\frac{y^{0}}{0!}\frac{z^{1}}{1!}\frac{h^{0}}{0!}\ + \ (-1)^{1}(1)!(1)_{0}(1)^{0}_{1}\frac{x^{1}}{1!}\frac{y^{1}}{1!}\frac{z^{0}}{0!}
\frac{h^{1}}{1!}\ + \ (-1)^{1}(0)!(2)_{1}(1)^{1}_{1}\frac{x^{1}}{1!}\frac{y^{0}}{0!}\frac{z^{0}}{0!}\frac{h^{2}}{2!}\ \ = $$
$$\ \ \frac{x^{2}}{2!}z\ - \ xyh\ - \ 2x\frac{h^{2}}{2!}.$$
\end{footnotesize}
}
\end{exa}

We are ready to show the main result of this work, namely, an  explicit formula for the product on $ \ \widehat{U}_{h}(\mathfrak{sl}_2) \ $ in
the divided power basis. The formula itself may look unwieldy at first, but we show in Section \ref{bas} that it has a transparent combinatorial meaning. We denote it by  $\ \star \ $ the product on $ \ \widehat{U}_{h}(\mathfrak{sl}_2) \ $ in order to distinguish it
from the commutative product of formal power series.

\begin{thm}\label{mt}
{\em Let $\ f,g \ \in \ \widehat{U}_{h}(\mathfrak{sl}_2) \ $ be given by
$$\ f\ = \ \sum_{a,b,c,d  \in \mathbb{N}} f_{a,b,c,d}\frac{x^{a}y^{b}z^{c}h^{d}}{a!b!c!d!} \ \ \ \ \ \ \ \mbox{and} \ \ \ \ \ \ \ g\ = \
\sum_{k,l,m,n  \in \mathbb{N}} g_{k,l,m,n}\frac{x^{k}y^{l}z^{m}h^{n}}{k!l!m!n!}.\ $$
The product $\ \ f\star g \ \in \ \widehat{U}_{h}(\mathfrak{sl}_2)\ $  is given by
$$f\star g \ \ = \ \ \sum_{\alpha,\beta,\gamma,\rho \in \mathbb{N}} \ (f\star g)_{\alpha,\beta,\gamma,\rho} \
\frac{x^{\alpha} y^{\beta}z^{\gamma}h^{\rho}}{\alpha!\beta!\gamma!\rho!}$$
where  the coefficients $\ (f\star g)_{\alpha,\beta,\gamma,\rho} \ $  are defined using $\ 13 \ $ auxiliary variables $$\alpha_1, \ \alpha_2, \ \beta_1, \ \beta_2, \ \beta_3, \ \gamma_1, \ \gamma_2,\ \rho_1, \ \rho_2, \ \rho_3, \ \rho_4,\ \rho_5,\ \rho_6 \ \in \ \mathbb{N}^{13}\ \ \ \ \ \mbox{such that:} $$
 $$\alpha_1 \ + \ \alpha_2\ \ = \ \ \alpha, \ \ \ \ \ \ \beta_1 \ + \ \beta_2 \ + \ \beta_3 \ \ = \ \ \beta,
\ \ \ \ \ \ \gamma_1  \ + \ \gamma_2  \ \ = \ \ \gamma,$$
$$\rho_1 \ + \ \rho_2 \ + \ \rho_3 \ + \ \rho_4 \ + \ \rho_5 \ + \ \rho_6\ = \ \rho, \ \ \ \ \ \ \beta_{3}\ + \ \rho_{4}\ = \ \rho_{3}.$$

The constant $\ (f\star g)_{\alpha,\beta,\gamma,\rho}  \ $ is given by
$$\sum \ f_{\alpha_{1},\beta_{1}+\rho_{5},\gamma_{1}+\rho_{3},\rho_{1}} \ g_{\alpha_{2}+\rho_{3},\beta_{3}+\rho_{6},\gamma_{2},\rho_{2}} \ \ \binom{\alpha}{\alpha_{1},\alpha_{2}}\ \binom{\beta}{\beta_{1},\beta_{2},\beta_{3}} \ \binom{\gamma}{\gamma_{1},\gamma_{2}} $$
$$(-1)^{\rho_{3}}\ \binom{\rho}{\rho_{1},\rho_{2},\rho_{3},\rho_{4},\rho_{5},\rho_{6}} \ (2\alpha_{2})^{\rho_{5}} \ (2\gamma_{1})^{\rho_{6}}\ \beta_{2}!\ \rho_{4}!\ (\gamma_{1}+\alpha_{2})_{\beta_3 + \rho_{4}}^{\rho_{4}}\
, $$ where the broken line means multiplication.

}
\end{thm}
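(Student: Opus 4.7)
The plan is to reduce to the case of pure basis monomials and then put the product in normal form by applying Proposition~\ref{bd} three times in succession, tracking all the factorials.

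By bilinearity of $\star$ it suffices to prove the formula when $f$ and $g$ are themselves divided-power monomials, $f = \frac{x^a y^b z^c h^d}{a!b!c!d!}$ and $g = \frac{x^k y^l z^m h^n}{k!l!m!n!}$. Since $h$ is central, the problem reduces to normalising the word $x^a y^b z^c x^k y^l z^m$; the $h$'s then combine as $h^{d+n}$. The only out-of-order adjacencies are $z^c$ against $x^k$, then $y^b$ against the leftover $x$'s, and finally the leftover $z$'s against $y^l$.

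I would carry out the reductions in exactly this order. First apply Proposition~\ref{bd}(3) to $\frac{z^c}{c!}\frac{x^k}{k!}$, introducing indices $0\le w\le v\le \min(c,k)$ so that the middle block becomes $\frac{x^{k-v}}{(k-v)!}\frac{y^{v-w}}{(v-w)!}\frac{z^{c-v}}{(c-v)!}\frac{h^{v+w}}{(v+w)!}$ weighted by $(-1)^{v}(v-w)!(v+w)_{w}(c+k-2v)^{w}_{v}$. Then apply Proposition~\ref{bd}(2) to commute $y^b$ past $x^{k-v}$, introducing an index $p$ with factor $(2(k-v))^p$; and Proposition~\ref{bd}(1) to commute $z^{c-v}$ past $y^l$, introducing an index $q$ with factor $(2(c-v))^q$. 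After these three passes the product becomes a sum over $v,w,p,q$ (and the eight original exponents) of $x^{a+k-v}\,y^{(b-p)+(v-w)+(l-q)}\,z^{(c-v)+m}\,h^{d+n+v+w+p+q}$ with a fully explicit rational coefficient.

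It remains to relabel the summation indices against the thirteen auxiliary variables: set $\alpha_1=a$, $\alpha_2=k-v$; $\gamma_1=c-v$, $\gamma_2=m$; $\rho_1=d$, $\rho_2=n$, $\rho_3=v$, $\rho_4=w$, $\rho_5=p$, $\rho_6=q$; and split $(b-p)+(v-w)+(l-q)$ among $\beta_1,\beta_2,\beta_3$ so that the stated linear relation $\beta_3+\rho_4=\rho_3$ becomes the trivial identity $(v-w)+w=v$. The subscripts of $f$ and $g$ in the statement then read off directly from the equations $b=\beta_1+\rho_5$, $c=\gamma_1+\rho_3$, $k=\alpha_2+\rho_3$, etc. The four multinomial coefficients $\binom{\alpha}{\alpha_1,\alpha_2}$, $\binom{\beta}{\beta_1,\beta_2,\beta_3}$, $\binom{\gamma}{\gamma_1,\gamma_2}$, $\binom{\rho}{\rho_1,\ldots,\rho_6}$ emerge from converting the denominators $a!b!c!d!k!l!m!n!$ into $\alpha!\beta!\gamma!\rho!$ via $\frac{1}{x_1!\cdots x_r!}=\frac{1}{(x_1+\cdots+x_r)!}\binom{x_1+\cdots+x_r}{x_1,\ldots,x_r}$, while the extra factors $\beta_2!$ and $\rho_4!$ absorb the shortfall created when $(v-w)!$ in the Proposition~\ref{bd}(3) coefficient cancels against $(v-w)!$ in the denominator, and when $(v+w)_w=(v+w)!/v!$ cancels against $(v+w)!$ similarly. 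The main obstacle is precisely this last accounting: patiently matching the leftover rational factor against $(-1)^{\rho_3}(2\alpha_2)^{\rho_5}(2\gamma_1)^{\rho_6}\beta_2!\,\rho_4!\,(\gamma_1+\alpha_2)^{\rho_4}_{\beta_3+\rho_4}$. No algebraic idea beyond iterating Proposition~\ref{bd} is needed; the entire argument is a controlled bookkeeping exercise.
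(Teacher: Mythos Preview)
Your proposal is correct and follows essentially the same route as the paper's own proof: reduce to divided-power monomials, apply Proposition~\ref{bd} three times in exactly the order you describe (first to $\frac{z^c}{c!}\frac{x^k}{k!}$, then to $\frac{y^b}{b!}\frac{x^{k-v}}{(k-v)!}$, then to $\frac{z^{c-v}}{(c-v)!}\frac{y^l}{l!}$), and finish with the change of variables. The paper's indices $i,u$ are your $p,q$, and it records the substitution as $\alpha_1=a$, $\alpha_2=k-v$, $\beta_1=b-i$, $\beta_2=l-u$, $\beta_3=v-w$, $\gamma_1=c-v$, $\gamma_2=m$, $\rho_j$ as you have them.
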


\begin{proof} The result follows after several applications of Proposition \ref{bd}. By definition the product $\ f\star g \ $ is equal to

$$ \sum_{a,b,c,d,k,l,m,n}^{} \;f_{a,b,c,d} \ g_{k,l,m,n}\left( \frac{x^{a}}{a!} \frac{y^{b}}{b!}\frac{z^{c}}{c!}\frac{h^{d}}{d!}\right)
 \star \left( \frac{x^{k}}{k!} \frac{y^{l}}{l!}\frac{z^{m}}{m!}\frac{h^{n}}{n!}\right).$$
Since $\ h \ $ commutes with the other variables, we put together the $\ h$-monomials and get
$$\sum_{}^{} \;\left[ f_{a,b,c,d}\ g_{k,l,m,n}\binom{d+n}{d,n} \right]  \frac{x^{a}}{a!} \frac{y^{b}}{b!} \left( \frac{z^{c}}{c!}  \frac{x^{k}}{k!} \right) \frac{y^{l}}{l!}\frac{z^{m}}{m!} \frac{h^{d+n}}{(d+n)!}.$$
Using Proposition \ref{bd} we order the selected $\ z \ $ and $\ x \ $ monomials obtaining

\

$$\sum_{}^{} \ \left[ f_{a,b,c,d}\ g_{k,l,m,n}\ \binom{d+n}{d,n}\ (-1)^{v}\ (v-w)!\ (v+w)_{w}\ (c+k-2v)_{v}^{w} \right]$$
$$ \frac{x^{a}}{a!} \frac{y^{b}}{b!} \left( \frac{x^{k-v}}{(k-v)!}  \frac{y^{v-w}}{(v-w)!}\frac{z^{c-v}}{(c-v)!}\frac{h^{v+w}}{(v+w)!} \right) \frac{y^{l}}{l!}\frac{z^{m}}{m!} \frac{h^{d+n}}{(d+n)!}$$

\

\noindent $\mbox{where} \ \ 0 \leq w \leq v \leq \mathrm{min}(c,k).\ \ $ We collet $\ h$-monomials together and get:
$$\sum_{}^{} \;\left[ f_{a,b,c,d}\ g_{k,l,m,n}\ \binom{d+n}{d,n}\ \binom{d+n+v+w}{d+n,\ v+w}\ (-1)^{v}\ (v-w)!\ (v+w)_{w}\ \big(c+k-2v \big)_{v}^{w} \right]$$
$$ \frac{x^{a}}{a!} \left( \frac{y^{b}}{b!}  \frac{x^{k-v}}{(k-v)!}\right)  \frac{y^{v-w}}{(v-w)!}\frac{z^{c-v}}{(c-v)!}\frac{y^{l}}{l!}\frac{z^{m}}{m!} \frac{h^{d+n+v+w}}{(d+n+v+w)!}.$$

\

\noindent Using Proposition \ref{bd} we write the selected $\ y \ $ and $ \ x \ $ monomials in normal order

$$\sum_{}^{} \;\left[ f_{a,b,c,d}\ g_{k,l,m,n}\ \binom{d+n+v+w}{d,n,v+w}\ (-1)^{v}\ (v-w)!\ (v+w)_{w} \ \big(c+k-2v\big)_{v}^{w} \ (2(k-v))^{i}\right]$$
$$ \frac{x^{a}}{a!} \left( \frac{x^{k-v}}{(k-v)!}\frac{y^{b-i}}{(b-i)!}  \frac{h^{i}}{i!}\right)  \frac{y^{v-w}}{(v-w)!}\frac{z^{c-v}}{(c-v)!}\frac{y^{l}}{l!}\frac{z^{m}}{m!} \frac{h^{d+n+v+w}}{(d+n+v+w)!}$$

\

\noindent $\mbox{with} \ \ 0 \leq i \leq b. \ $ Collecting $\ h$-monomials we get:

$$\sum_{}^{} \; f_{a,b,c,d} \ g_{k,l,m,n}\binom{d+n+v+w+i}{d,n,v+w,i}\binom{a+k-v}{a,k-v}\binom{b-i+v-w}{b-i,v-w}(-1)^{v}(v-w)!(v+w)_{w} $$
$$ \big(c+k-2v\big)_{v}^{w} (2(k-v))^{i} \frac{x^{a+k-v}}{(a+k-v)!} \frac{y^{b-i+v-w}}{(b-i+v-w)!}\left( \frac{z^{c-v}}{(c-v)!}\frac{y^{l}}{l!}\right) \frac{z^{m}}{m!} \frac{h^{d+n+v+w+i}}{(d+n+v+w+i)!}.$$

\

\

\noindent Using Proposition \ref{bd} to write the selected $\ z \ $ and $ \ y \ $ monomials in normal order, and collecting $\ h$-monomials we get:

$$\sum \ f_{a,b,c,d} \ g_{k,l,m,n}\binom{d+n+v+w+i+u}{d,n,v+w,i,u}\binom{c-v+m}{c-v,m}\binom{b-i+v-w+l-u}{b-i,v-w,l-u}$$

$$ (-1)^{v}\ \binom{a+k-v}{a,k-v}(v-w)!\ (v+w)_{w} \ \big(c+k-2v\big)_{v}^{w} \ (2(k-v))^{i}\ (2(c-v))^{u} $$

$$\frac{x^{a+k-v}}{(a+k-v)!} \frac{y^{b-i+v-w+l-u}}{(b-i+v-w+l-u)!} \frac{z^{c-v+m}}{(c-v+m)!}\frac{h^{d+n+v+w+i+u}}{(d+n+v+w+i+u)!}$$

\

\noindent $\mbox{with} \ \ 0 \leq u \leq l.$ $\ \ \mbox{Using} \ \ (v+w)_{w}\ = \ \binom{v+w}{w}w!, \ \ $ we obtain:

$$\sum \ f_{a,b,c,d} \ g_{k,l,m,n}\binom{d+n+v+w+i+u}{d,n,v,w,i,u}\binom{c-v+m}{c-v,m}\binom{b-i+v-w+l-u}{b-i,v-w,l-u}$$

$$ (-1)^{v}\ \binom{a+k-v}{a,k-v} (v-w)! \ w! \ \big(c+k-2v\big)_{v}^{w} \ (2(k-v))^{i} \ (2(c-v))^{u} $$

$$\frac{x^{a+k-v}}{(a+k-v)!} \frac{y^{b-i+v-w+l-u}}{(b-i+v-w+l-u)!} \frac{z^{c-v+m}}{(c-v+m)!}\frac{h^{d+n+v+w+i+u}}{(d+n+v+w+i+u)!}.$$

\

\noindent Finally performing the change of variables specified below we obtain that:

$$\sum_{}^{} \; f_{\alpha_{1},\beta_{1}+\rho_{5},\gamma_{1}+\rho_{3},\rho_{1}} \ g_{\alpha_{2}+\rho_{3},\beta_{3}+\rho_{6},\gamma_{2},\rho_{2}}\binom{\rho}{\rho_{1},\rho_{2},\rho_{3},\rho_{4},\rho_{5},
\rho_{6}}\binom{\gamma}{\gamma_{1},\gamma_{2}}\binom{\beta}{\beta_{1},\beta_{2},\beta_{3}}\binom{\alpha}{\alpha_{1},\alpha_{2}}$$
$$ \Big[ (-1)^{\rho_{3}}\ \beta_{2}! \ \rho_{4}! \ (\gamma_{1}+\alpha_{2})_{\rho_{3}}^{\rho_{4}} \ (2\alpha_{2})^{\rho_{5}}\ (2\gamma_{1})^{\rho_{6}}\Big]\ \frac{x^{\alpha}}{\alpha!} \frac{y^{\beta}}{\beta!} \frac{z^{\gamma}}{\gamma!}\frac{h^{\rho}}{\rho!},$$
where  $$\alpha_1=a,\ \ \ \alpha_2=k-v,\ \ \ \beta_1=b-i,\ \ \ \beta_2=l-u ,\ \ \ \beta_3=v-w, \ \ \ \gamma_1=c-v, \ \ \ \gamma_2=m,$$
$$\rho_1=d,\ \ \ \rho_2=n,\ \ \ \rho_3=v,\ \ \ \rho_4=w,\ \ \ \rho_5=i ,\ \ \ \rho_6=u, \ \ \ \beta_3 +\rho_4 = \rho_3.$$
\end{proof}

\section{Combinatorial approach towards $\ \widehat{U}_{h}(\mathfrak{sl}_2)$}\label{bas}

The formula for the product on $\ \widehat{U}_{h}(\mathfrak{sl}_2)\ $ given in Theorem \ref{mt} although explicit is quite hard to digest. The presence of $13$ auxiliary variables may suggest  that an intuitive understanding of this product is simply hopeless. In this section we argue on the contrary, and show that a sophisticated but easy to grasp combinatorial understanding of the product rule is indeed possible.\\

 We are going to phrase our results in the language of the theory of species introduced by Joyal \cite{j1, j2}. The reader may consult \cite{berg} for a comprehensive introduction to the subject, including a fairly exhaustive list of references.  Here we just briefly introduce the main ideas needed in order to make this work
reasonably self-contained, following the categorical approach developed in \cite{bd, ecd, re2}. \\

Let $\ \mathbb{B} \ $  be the category of finite sets and bijections, i.e. $\ \mathbb{B}\ $ is the underlying groupoid of the category $\ \mathrm{set} \ $ of finite sets and maps. For $\ d\geq 1,\ $ let $\ \mathbb{B}^d \ $ be the $\ d$-fold Cartesian product of $\ \mathbb{B} \ $ with itself.
Objects of $\ \mathbb{B}^d \ $ are $d$-tuples of finite sets, and may also be regarded as pairs $\ (a,f),\ $ where $\ a \ $ is a finite
set and $\ f:a \rightarrow [d]\ $ is a  map. A morphism from $\ (a, f) \ $ to $\ (b, g) \ $ is a
bijection $\ \alpha:a \rightarrow b \ $ such that $\ g\alpha =  f. \ $  We think of  $\ [d] \ $ as a set of $d$ colors, and $\ (a,f) \ $ as a colored set.\\

Let $\ C \ $ be a distributive category, meaning that $C$ comes with functors $\ \oplus \ $ and $ \ \otimes \ $ satisfying suitable axioms.
Coherence laws for such structures have been introduced by Laplaza \cite{l2, l1}. For the purposes of this work the reader may take  $\ C \ $ to be the category of finite sets and maps, or a category of finite dimensional vector spaces and linear transformations over a field. In the latter case $\ \oplus \ $ and $ \ \otimes \ $ are, respectively, direct sum and tensor product of vector spaces, whereas in the former case
$\ \oplus \ $ and $ \ \otimes \ $ are disjoint union and Cartesian product, and thus are denoted by $\ \sqcup \ $ and $\ \times \ $. \\

We also demand that a negative functor $\ -:C \rightarrow C \ $ be defined on $\ C,\ $ which is assumed  to come with natural isomorphisms:
$$-0 \ \simeq \ 0, \ \ \ \ \ \ \ -(a\oplus b)\ \simeq \ -a\oplus -b, \ \ \ \ $$
$$(-a)\otimes b \ \simeq \ - (a \otimes b) \ \simeq \ a\otimes (-b) \ \ \ \ \mbox{for} \ \ a,b \in C.$$

\

There is a simple mechanism, akin to Grothendick's construction of the group associated to an abelian monoid,  enhancing a distributive category into a distributive category with a negative functor.
Namely, given $\ C \ $ one considers the Cartesian product category $\ \mathbb{Z}_2\mbox{-}C  =  C\times C. \ $
The sum, negative,  and product functors on $\mathbb{Z}_2\mbox{-}C$
are given on objects $a_1, a_2, b_1, b_2\in C$ by
$$(a_1,a_2)\oplus (b_1,b_2)\ = \ (a_1\oplus b_1 \ , \ a_2\oplus
b_2),  \ \ \  -(a_1,a_2)\ = \ (a_2,a_1),$$ $$(a_1,a_2)\otimes
(b_1,b_2)\ = \ (a_1\otimes b_1 \ \oplus \ a_2\otimes b_2 \ \ , \ a_1\otimes
b_2\ \oplus \ a_2\otimes b_1),$$ and naturally extended
to morphisms. There is  an inclusion functor $\ \ i:C\ \rightarrow \ \mathbb{Z}_2\mbox{-}C \ \ $
given on objects and morphisms by $\ i(a) = (a,0) \ \ \mbox{and} \ \  i(f) =  (f,1_a).$\\

 Note that we have a functor $\ \mathrm{set} \rightarrow C \ $ which sends a finite set $x$ to the object $\ \bigoplus_{i \in x}k\ $ of $\ C. \ $ This functor allow us to transport combinatorial constructions  to $\ C. \ $ \\

Consider the category $\ [\mathbb{B}^d, C] \ $  of functors from $\ \mathbb{B}^d \ $ to  $\ C, \ $ and natural transformations as morphisms. There are several interesting structural functors on $\ [\mathbb{B}^d, C] \ $ some of which we proceed to briefly describe.
These structural functors on $C$ are inspired by the close relationship between
functors in $\ [\mathbb{B}^d, C] \ $ and formal power series in $d$-variables defined via generating series.\\

We assume that $\ C \ $ comes with a valuation map $\ | \ |: C \rightarrow R \ $ where $\ R\ $ is a ring with $\ R \supseteq \mathbb{Q}. \ $  For example, for the category of finite sets we let  $|x|$ be the cardinality of $x$, and for the category of finite dimensional vector spaces we let $|V|$ be the dimension of $V$. The valuation map should satisfy the following identities for $\ a,b \in C:$
$$|a|=|b| \ \ \ \mbox{if} \ \ \ a \simeq b, \ \ \ \ \  |a\oplus b|=|a|+|b|, \ \ $$
$$|a \otimes b| = |a||b|, \ \ \ \ |1|=1, \ \ \ \ |0|=0, \ \ \ \  \mbox{and}  \ \ \ \
|-a|=-|a|.$$ The valuation map on $\ C \ $ can be extended to a map   $$| \ |: [\mathbb{B}^d, C] \ \rightarrow \ R[[x_1,...,x_d]] \ \ \ \ \ \mbox{which sends a functor to its generating series}$$
$$|F| \ = \ \sum_{(n_1,...,n_d) \in \mathbb{N}^d} |F([n_1],...,[n_d])|\frac{x_1^{n_1}...x_d^{n_d}}{n_1!...n_d!}.$$
Next we introduce  further structures present on the category $\ [\mathbb{B}^d, C].$

\begin{itemize}

\item The {\bf sum functor} $\ F + G  \in  [\mathbb{B}^d, C] \ $ is given on $\ (x,f) \in \mathbb{B}^d\ $ by
$$(F+G)(x,f) \ \ = \ \ F(x,f) \ + \ G(x,f),$$
and is such that $\ |F+G|  =  |F| + |G|.$

\item The {\bf negative functor} $\ -F \in  [\mathbb{B}^d, C] \ $ is given on $\ (x,f) \in \mathbb{B}^d\ $ by
$$(-F)(x,f) \ \ = \ \ -\big( F(x,f)\big),$$
and is such that $\ |-F|  =  -|F|.$

\item The {\bf Hadamard product} functor  $\ F\times G  \in  [\mathbb{B}^d, C] \ $ is given on $\ (x,f) \in \mathbb{B}^d\ $ by
$$F\times G(x,f) \ \ = \ \ F(x,f)\otimes G(x,f),$$
and is such that $\ |F\times G|  =  |F|\times |G|\ $ where the Hadamard product on series is given by coefficient-wise multiplication.

\item The {\bf product} functor $\ FG  \in  [\mathbb{B}^d, C] \ $ is given on $\ (x,f) \in \mathbb{B}^d\ $ by
$$FG(x,f) \ \ = \ \ \underset{(x_1,f_1) \sqcup (x_2,f_2)=(x,f)}{\bigoplus} \ F(x_1,f_1)\otimes G(x_2,f_2),$$
and is such that $\ |FG|  =  |F||G|. \ $

\item The {\bf composition} or {\bf substitution} functor $\ F(G_1,...,G_d)  \in  [\mathbb{B}^d, C] \ $ is  given on $\ (x,f) \in \mathbb{B}^d\ $ by
$$F(G_1,...,G_d)(x,f) \ \ = \ \ \underset{\underset{c: \pi \rightarrow [d]}{\pi \in \mathrm{Par}(x)}}{\bigoplus}
\ F(\pi,c)  \otimes  \bigotimes_{a \in \pi}G_{c(a)}(a,f|_a),$$ where we assume that $\ G_i(\emptyset) = 0. \ $ We have that
$\ |F(G_1,...,G_d)|  =  |F|(|G_1|,...,|G_d|) .$

\item A {\bf  quantum $\star$-product} on a suitable category of functors was introduced in \cite{re2} in order to produce a categorication of the formal homogeneous Weyl algebras. Explicitly, for $\ d \geq 1,\ $ consider the category of functors $$ [\mathbb{B}^{2d+1}, C] \ = \ [\mathbb{B}^{d}\times \mathbb{B}^{d}\times \mathbb{B}, C]. $$
We regard objects of $\ \mathbb{B}^{d}\times \mathbb{B}^{d}\times \mathbb{B} \ $ as triples $\ (x,f,h)\ $ where $x$ and $h$ are finite sets,
and $\ f: x \rightarrow [d]\sqcup [\widetilde{d}]\ $ is a map. The $\ \star$-product $\ F\star G \in [\mathbb{B}^{2d+1}, C] \ $ of
functors $\ F,G \in [\mathbb{B}^{2d+1}, C] \  $ is given by
$$F\star G(x,f,h)=\bigoplus F
(x_1 \sqcup h_3,f|_{x_1}\sqcup \widetilde{g},h_1)\otimes G(x_2
\sqcup h_3,f|_{x_2}\sqcup g,h_2)$$ where the sum runs
over all pairs $\ x_1, \  x_2 \ $ and all triples $\ h_1,\ h_2, \ h_3\ $ such
that $$x_1\ \sqcup  \ x_2\ = \ x,\ \ \ \ \  h_1 \ \sqcup \ h_2 \ \sqcup \ h_3\ = \  h \ \ \
\mbox{ \ \ and \ \ } \ \ \ g:h_3 \longrightarrow [d].$$
Where $\ f|_{x_1} \ $ and $\ f|_{x_2} \ $ are the restriction maps, and $\ \widetilde{g} \ $ has the same domain as $g$, and assumes the corresponding values in $\ [\widetilde{d}] = \{\widetilde{1},..,\widetilde{d} \}.\ $
This product is a categorification of the formal homogeneous Weyl algebra in the sense that we have a map
$$| \ |: [\mathbb{B}^{2d+1}, C]\ \longrightarrow \ \mathbb{W}_n(R)$$ sending a functor to its generating function $\ |F| \ $ given by
$$\sum_{(n_1,...,n_d,m_1,...,m_d,k) \in \mathbb{N}^{2d+1}} \big|F([n_1],...,[n_d],[m_1],...,[m_d],[k])\big|\frac{x_1^{n_1}...x_d^{n_d}}{n_1!...n_d!}
\frac{y_1^{m_1}...y_d^{m_d}}{m_1!...m_d!}\frac{h^{k}}{k!},$$
which satisfies that $$|F\star G| \ = \ |F|\star |G| .$$ On the right hand side of this equation the $\star$-product of formal power series is the
 product of formal power series in the formal homogeneous Weyl algebra:

$$\mathbb{W}_n(R)\ = \ R<<x_1,....,x_d,y_1,....,y_d, h>>/I_d, \ \  \mbox{where:} $$

\begin{itemize}
\item $R<<x_1,....,x_d,y_1,....,y_d,h>>\ $ is the ring of formal power series with coefficients in $\ R\ $ in the non-commutative
 variables $\ x_1,....,x_d,y_1,....,y_d,h$.

\item $I_d\ $ is the ideal generated by the  relations:
\begin{itemize}
\item $h\ $ commutes with all other variables.
\item  $x_jx_i=x_ix_j \ \ $ and  $\ \ y_jy_i=y_iy_j, \ \ $ for $\ i,j \in [d].$
\item $y_jx_i=x_iy_j, \ \ $ for $\ i\neq j \in [d]$.
\item $y_ix_i=x_iy_i + h, \ \ $ for $\ i \in [d].$
\end{itemize}
\end{itemize}

\end{itemize}

Next we introduce a few examples of functors that will be needed in our construction of the combinatorial counterpart of the product on $\ \widehat{U}_{h}(\mathfrak{sl}_2).\ $

\begin{itemize}

  \item  The functor of linear orderings $\ \mathbb{L} : \mathbb{B} \rightarrow \mathrm{set}\ $ is given on objects by
  $$\mathbb{L}(x) \  = \  \big\{\alpha:[x] \rightarrow x \ \big| \ \alpha\ \  \mbox{bijective} \big\}.$$
  The generating series of $\ \mathbb{L} \ $ is   $\ \ \displaystyle \frac{1}{1-x} .$

  \item The functor of maps $\ \big[\ , \ \big]:\mathbb{B}^2 \rightarrow \mathrm{set} \ $ is given on objects by
  $$\big[x,y\big] \ \ = \ \ \{f\ | \ f: x \rightarrow y \ \ \mbox{is a map}\}.$$ The generating series of $\ \big[\ , \ \big] \ $
  is given by $\ \displaystyle \big|[ \ , \ ] \big| \ = \ \displaystyle e^{ye^x}. $

  \item The functor  $\ \left(\bullet \right)_{\bullet}^{\bullet} : \mathbb{B}^3 \rightarrow \mathrm{set}\ $ is such that
  $\ (x)_y^z \ $ is the subset of $$ \big[\ [z]\ , \bigsqcup_{i=0}^{|y|-1} x \sqcup [i]\ \big] $$
   consisting of those maps such that:
   \begin{itemize}
     \item For $\ s< t\ $ in  $\ [z], \ $  if $\ s \ $ is mapped into the component   $\ x \sqcup [i], \ $ then $\ t \ $ is mapped into a component
     $\ x \sqcup [j] \ $ with $\ i < j.$
     \item By convention $\ (x)_{\emptyset}^{\emptyset} \ $ is a one element set, and  $\ (x)_{\emptyset}^{z}=\emptyset \ $
   for $\ z \neq \emptyset. $
   \end{itemize}

   The generating function of $\ \left(\bullet \right)_{\bullet}^{\bullet} \ $ is given by
   $$\big|\left(\bullet \right)_{\bullet}^{\bullet} \big| \ = \ \sum_{n, m,  k \in \mathbb{N}}(n)_m^k\frac{x^ny^mz^k}{n!m!k!}. $$

   \item Below we  use the functor $\ \mathbb{B}^4 \rightarrow C \ $  sending a tuple of finite sets $\ (a,b,c,d) \ $ to
     $\ \left(a \sqcup b \right)_{c \sqcup d}^{d}. $

   \item The $x$-colored singleton functor $ X: \mathbb{B}^4 \rightarrow C $ evaluated on  $\ (x,y,z,h) \ $ is $1$ if $\ (|x|,|y|,|z|,|h|) = (1,0,0,0), \ $ and $0$ otherwise. Singleton functors $\ Y, \ Z, \ $ and  $\ H\ $ are defined analogously. The corresponding generating series are given by
       $\ |X|=x, \ |Y|=y, \ |Z|=z, \ |H|=h.$

   \item  The divided power functor $\ \frac{X^{a}}{a!}: \mathbb{B}^4 \rightarrow \mathbb{C}, \ $ for $\ a \geq 0, \ $ is given by
$$\frac{X^{a}}{a!}(x,y,z,h) \ = \  \left\{
\begin{array}{cc}
1 & \mathrm{if}\ \ \ |x|=a, \ y=z=h=\emptyset,\\
0 & \ \mathrm{otherwise},
\end{array}\right. $$
with analogue definitions for the divided powers functors   $\ \frac{Y^{a}}{a!}, \ \frac{Z^{a}}{a!}, \ \frac{H^{a}}{a!}$. The corresponding generating series are given by  $\ |\frac{X^{a}}{a!}|=\frac{x^{a}}{a!}, \ |\frac{Y^{a}}{a!}|=\frac{y^{a}}{a!}, \
|\frac{Z^{a}}{a!}|=\frac{z^{a}}{a!}, \  |\frac{H^{a}}{a!}|=\frac{h^{a}}{a!}$.

\end{itemize}

Recall that we are fixing a distributive category with negative objects $C$ which comes with a valuation taking values in a ring $\ R \supseteq \mathbb{Q}.\ $
Our next goal is to introduce a $\ \star$-product on the category $\ [ \mathbb{B}^4, C] \ $ which encodes the combinatorial properties of the product
on $\ \widehat{U}_{h}(\mathfrak{sl}_2). \ $ We think of an object in $\ \mathbb{B}^4 \ $ either as a $4$-tuple of finite sets, or as a $4$-colored finite
set with the following conventions: red represents the variable $\ x, \ $ purple represents the variable $\ y, \ $ green represents the variable $\ z, \ $ and blue represents the variable $\ h. $

\begin{defi}\label{fp}
{\em Let $\ F,G \ $  be functors $\ \mathbb{B}^4 \rightarrow C, \ $ the $\star$-product functor $\ F \star G :\mathbb{B}^4 \rightarrow C\ $  sends a
tuple $\ (x,y,z,h) \in \mathbb{B}^4 \ $  to the object  $\ F \star G (x,y,z,h)  \in C \ $ given by
$$ \displaystyle \bigoplus\left( -1 \right)^{\mid h_3 \mid} F\left( x_1, y_1 \sqcup h_5, z_1 \sqcup h_3, h_1 \right)\otimes G\left( x_2 \sqcup h_3, y_2 \sqcup h_6, z_2, h_2 \right) \otimes \mathbb{M}(x_2,z_1,y_3,h_4,h_5,h_6)$$ where the sum runs over the tuples
$$\ x_1,\  x_2, \ y_1, \ y_2, \ y_3,\  z_1, \ z_2, \  h_1, \ h_2, \ h_3,\ h_4,\ h_5,\  h_6 \  \ \in  \ \mathbb{B}^{13} \ $$ such that
$$x_1 \sqcup x_2\ = \ x, \ \ \ \ \ \  \ \ \ y_1 \sqcup y_2 \sqcup y_3\ = \ y, \ \ \ \ \ \ \ \ \ z_1 \sqcup z_2\ = \ z,$$
$$h_1 \sqcup h_2 \sqcup h_3\sqcup h_4 \sqcup h_5 \sqcup h_6\ = \ h, \ \ \ \ \ \ \ \ \ |y_3|  \ + \  |h_4| \ =  \ h_3,$$
and the functor $\ \mathbb{M}:\mathbb{B}^6  \rightarrow  C\ $ is given on objects by
$$\mathbb{M}(x_2,z_1,y_3,h_4,h_5,h_6) \  =  \ [h_5, x_2 \sqcup x_2] \otimes  [h_2,z_1 \sqcup z_1] \otimes \mathbb{L}(y_3) \otimes \mathbb{L}(h_4) \otimes
(z_1 \sqcup x_2)_{y_3 \sqcup h_4}^{y_3}.$$
}
\end{defi}

\begin{prop}{\em The product $\ F \star G \ $ is indeed a functor $\ \mathbb{B}^4 \rightarrow C.$
}
\end{prop}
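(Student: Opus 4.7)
The plan is to verify the two requirements for $F\star G$ to be a functor: it must assign a morphism to every morphism of $\mathbb{B}^{4}$, and these assignments must respect identities and composition. Since $\mathbb{B}^{4}$ is a groupoid, every morphism is a bijection $\sigma=(\sigma_x,\sigma_y,\sigma_z,\sigma_h):(x,y,z,h)\to(x',y',z',h')$, so the task reduces to exhibiting a coherent way to transport the direct-sum decomposition defining $F\star G(x,y,z,h)$ onto the analogous decomposition of $F\star G(x',y',z',h')$.

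First I would observe that $\sigma$ acts on decompositions by image: given a thirteen-tuple $(x_1,x_2,y_1,y_2,y_3,z_1,z_2,h_1,\dots,h_6)$ satisfying the partition constraints $x_1\sqcup x_2=x$, $y_1\sqcup y_2\sqcup y_3=y$, $z_1\sqcup z_2=z$, $h_1\sqcup\dots\sqcup h_6=h$ and the cardinality condition $|y_3|+|h_4|=|h_3|$, the tuple of images under the appropriate component of $\sigma$ satisfies the same constraints because bijections preserve disjoint unions and cardinalities. This produces a bijection between the indexing sets for the coproducts defining the source and target of $F\star G(\sigma)$, matching summands in pairs.

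On each matched pair of summands I would build a morphism by tensoring the morphisms obtained from the functoriality of each building block: the restricted bijections $\sigma|_{x_i},\sigma|_{y_j},\sigma|_{z_k},\sigma|_{h_\ell}$ are morphisms in the relevant $\mathbb{B}^{n}$ and so $F$ produces a morphism $F(x_1,y_1\sqcup h_5,z_1\sqcup h_3,h_1)\to F(\sigma_x(x_1),\sigma_y(y_1)\sqcup\sigma_h(h_5),\sigma_z(z_1)\sqcup\sigma_h(h_3),\sigma_h(h_1))$, and similarly for $G$. For the factor $\mathbb{M}(x_2,z_1,y_3,h_4,h_5,h_6)$, functoriality follows from the already-established functoriality of $\mathbb{L}$, of the mapping functor $[\,,\,]$, and of $(\bullet)^{\bullet}_{\bullet}$, combined with the functoriality of $\otimes$. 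The sign $(-1)^{|h_3|}$ is preserved automatically, since $|\sigma_h(h_3)|=|h_3|$, and thus equals the sign attached to the image summand. Tensoring these three morphisms and then summing over the indexing bijection gives a well-defined morphism $F\star G(\sigma)$, using that $\otimes$ and $\oplus$ are functorial on $C$.

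Finally, the identity and composition laws follow directly: when $\sigma$ is the identity, each restricted bijection is an identity, each functorial piece ($F$, $G$, $\mathbb{M}$) returns an identity morphism, and the indexing bijection on tuples is the identity, so the total morphism is the identity on $F\star G(x,y,z,h)$; for the composition $\tau\circ\sigma$, the indexing bijection on decompositions composes correspondingly, and each tensor factor composes by functoriality of $F$, $G$, and $\mathbb{M}$. The only potential subtlety—and hence the main thing to be careful about—is matching the decomposition tuples across $\sigma$ in such a way that the summand-by-summand assembly is natural, which the partition-by-image prescription handles cleanly. No combinatorial identity is required; the statement is a categorical well-definedness check.
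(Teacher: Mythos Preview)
Your proposal is correct and follows essentially the same approach as the paper's proof, which simply observes that all the structures entering the definition of $F\star G$ (partitions, maps, linear orderings, $F$-, $G$-, and $(\bullet)_{\bullet}^{\bullet}$-structures) are transportable under bijections. Your version is merely a more explicit unpacking of what ``transportable under bijections'' entails, including the image action on decomposition tuples and the verification of identity and composition; nothing substantively new is needed beyond what you have written.
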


\begin{proof}We must show that a $4$-tuple of bijections from $\ (x, y, z,h) \ $ to $\ (a,b,c,d) \ $ induces a map
$ \ F \star G(x, y, z,h) \rightarrow  F \star G(a,b,c,d), \ $  and that this correspondence respects composition.
The result follows since the structures involved in the construction of $\ F\star G \ $(partitions of sets, maps between sets, linear orderings, $F$-structures, $G$-structures, and $\ \left(\bullet \right)_{\bullet}^{\bullet}$-structures) are transportable under bijections.
\end{proof}

The category $\ [ \mathbb{B}^4, C]  \ $ comes equipped with a natural map
$$| \ |: [ \mathbb{B}^4, C] \ \longrightarrow \ \widehat{U}_{h}(\mathfrak{sl}_2) \ \ \ \ \ \ \mbox{given by}$$
$$|F| \ = \  \sum_{a,b,c,d \in \mathbb{N}}|F([a],[b],[c],[d])|\frac{x^ay^bz^ch^d}{a!b!c!d!}.$$

\begin{figure}[t]
 \begin{tabular}{|p{15cm}|p{15cm}|p{15cm}|}\hline
\begin{center}
\includegraphics[width=15cm]{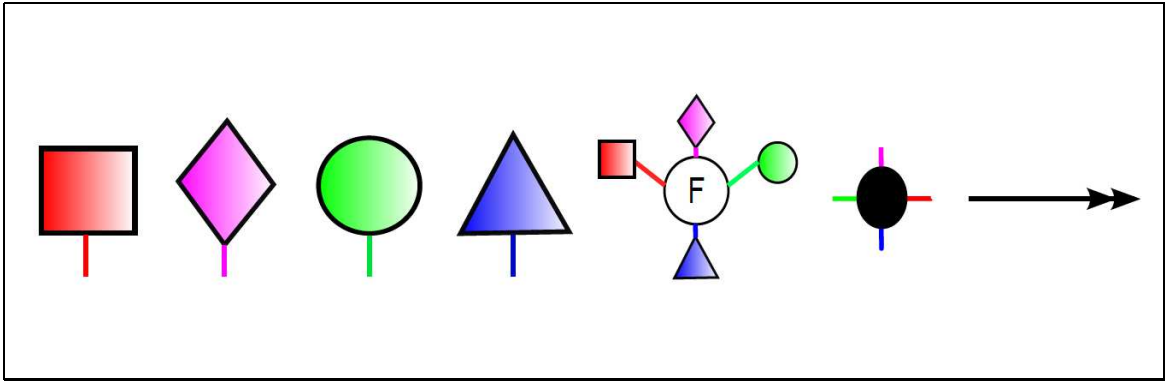}
\end{center}
\\\hline
\end{tabular}
\caption{Basic ingredients for the graphical interpretation.}\label{f2}
\end{figure}

\begin{thm}\label{im}
{\em For $\ F,G \in [ \mathbb{B}^4, C] \ $ we have that $\ \ \big| F \star G\big| \ = \ |F|\star |G|.$
}
\end{thm}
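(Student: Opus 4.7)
The plan is to apply the valuation $|\cdot|$ to the defining formula of $F\star G$ given in Definition \ref{fp}, and verify coefficient-by-coefficient that the resulting exponential generating series matches the product formula for $|F|\star|G|$ described in Theorem \ref{mt}. The proof is essentially a dictionary: the $13$ auxiliary subsets $x_1,x_2,y_1,y_2,y_3,z_1,z_2,h_1,\ldots,h_6$ appearing in the categorical $\star$-product have been engineered to correspond, via cardinality, to the $13$ auxiliary natural numbers $\alpha_1,\alpha_2,\beta_1,\beta_2,\beta_3,\gamma_1,\gamma_2,\rho_1,\ldots,\rho_6$ of the algebraic product formula.

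First I would evaluate $F\star G$ on the representative object $([\alpha],[\beta],[\gamma],[\rho])\in\mathbb{B}^4$ and stratify the defining direct sum by the cardinalities of the $13$ indexing sets. For each fixed tuple of cardinalities, the number of ordered set partitions of $[\alpha]=x_1\sqcup x_2$, $[\beta]=y_1\sqcup y_2\sqcup y_3$, $[\gamma]=z_1\sqcup z_2$, and $[\rho]=h_1\sqcup\cdots\sqcup h_6$ is exactly the product of the four multinomial coefficients appearing in Theorem \ref{mt}. The constraint $|y_3|+|h_4|=|h_3|$ built into Definition \ref{fp} translates to the constraint $\beta_3+\rho_4=\rho_3$, and the factor $(-1)^{|h_3|}$ becomes $(-1)^{\rho_3}$.

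Next I would use multiplicativity of $|\cdot|$ under $\otimes$ to evaluate the cardinality of each remaining tensor factor. The factor $F(x_1,y_1\sqcup h_5,z_1\sqcup h_3,h_1)$ contributes $f_{\alpha_1,\beta_1+\rho_5,\gamma_1+\rho_3,\rho_1}$ by reading off the cardinalities of the four arguments, and analogously $G$ contributes $g_{\alpha_2+\rho_3,\beta_3+\rho_6,\gamma_2,\rho_2}$. The tensor factors of the auxiliary functor $\mathbb{M}$ unfold using the standard cardinalities listed in the preceding examples: $|[h_5,x_2\sqcup x_2]|=(2\alpha_2)^{\rho_5}$ and the analogous map-set yields $(2\gamma_1)^{\rho_6}$; the linear-order functors $\mathbb{L}(y_3)$ and $\mathbb{L}(h_4)$ yield the factorials $\beta_3!$ and $\rho_4!$; and the $(\bullet)^\bullet_\bullet$-functor contributes $(\gamma_1+\alpha_2)^{\rho_4}_{\beta_3+\rho_4}$ via its generating-series cardinality.

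Combining all these contributions and dividing by $\alpha!\beta!\gamma!\rho!$ to pass from raw cardinalities to divided-power coefficients recovers precisely the formula for $(f\star g)_{\alpha,\beta,\gamma,\rho}$ stated in Theorem \ref{mt}. The main obstacle is therefore not computational but one of careful bookkeeping: one must check rigorously that the $13$ auxiliary sets are paired with the $13$ auxiliary integers in the correct assignment, that the arguments of $F$ and $G$ end up with exactly the multi-indices prescribed, and that the combinatorial factors in $\mathbb{M}$ reproduce the products of $(2\alpha_2)^{\rho_5}$, $(2\gamma_1)^{\rho_6}$, $\beta_3!\,\rho_4!$, and the $(\bullet)^\bullet_\bullet$-symbol. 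Once the dictionary is verified, the equality $|F\star G|=|F|\star|G|$ follows coefficient by coefficient.
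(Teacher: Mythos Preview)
Your approach is correct and essentially identical to the paper's own proof, which is likewise just a term-by-term dictionary matching the ingredients of Definition~\ref{fp} against the factors in Theorem~\ref{mt}. One bookkeeping caution: with the natural assignment $|y_i|=\beta_i$, the second argument of $G$ is $y_2\sqcup h_6$ and so should yield $\beta_2+\rho_6$, which is inconsistent with your simultaneous claim that $\mathbb{L}(y_3)$ produces $\beta_3!$; the paper's own statement of Theorem~\ref{mt} and proof of Theorem~\ref{im} have $\beta_2$ and $\beta_3$ interchanged in places, so be careful to fix a single consistent labeling when you write this up.
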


\begin{proof}The result follows by correlating the various ingredients taking part in Definition \ref{fp} and Theorem  \ref{mt}:
\begin{itemize}
  \item The partitions in Definition \ref{fp} give rise to the sum, the binomial coefficients, and the multinomial coefficients in Theorem \ref{mt}. The same sign $\ (-)^{|h_3|} = (-1)^{\rho_3}\ $ is applied in both cases.
  \item The $\ \otimes$-factors $\  F\left( x_1, y_1 \sqcup h_5, z_1 \sqcup h_3, h_1 \right)\otimes G\left( x_2 \sqcup h_3, y_2 \sqcup h_6, z_2, h_2 \right)\ $ in  Definition \ref{fp} give rise to the factors $\ f_{\alpha_{1},\beta_{1}+\rho_{5},\gamma_{1}+\rho_{3},\rho_{1}} \ g_{\alpha_{2}+\rho_{3},\beta_{3}+\rho_{6},\gamma_{2},\rho_{2}} \ $ in  Theorem \ref{mt}.
  \item The $\ \otimes$-factors $\ [h_5, x_2 \sqcup x_2] \otimes  [h_6,z_1 \sqcup z_1] \ $ in  Definition \ref{fp} give rise to the factors $\ (2\alpha_{2})^{\rho_{5}} \ (2\gamma_{1})^{\rho_{6}} \ $ in Theorem \ref{mt}.
  \item The $\ \otimes$-factors  $\ \mathbb{L}(y_3) \otimes \mathbb{L}(h_4)\ $ in  Definition \ref{fp} give rise to the factorial factors
  $\ \beta_{2}!\ \rho_{4}!\ $  in Theorem \ref{mt}.
  \item  The $\ \otimes$-factor $ \ (z_1 \sqcup x_2)_{y_3 \sqcup h_4}^{y_3} \ $ in  Definition \ref{fp} gives rise to factor
   $\ (\gamma_{1}+\alpha_{2})_{\beta_{3}+\rho_{4}}^{\rho_{4}}\ $   in Theorem \ref{mt}.
\end{itemize}
\end{proof}

\section{Graphs and the $ \star $-product on $\ [ \mathbb{B}^4, C]$}

In this final  section  we introduce a graphical interpretation for the $\star$-product on $\ [ \mathbb{B}^4, C], \ $ and thus we obtain, via Theorem \ref{im}, a combinatorial interpretation for the product on $\ \widehat{U}_{h}(\mathfrak{sl}_2). \ $ Let us introduce the basic
ingredients,  shown in Figure \ref{f2}, from which we construct the kind of graphs that we are going to need.

\begin{itemize}
\item The red square with a line attached to it represents a $x$-colored set. If one wishes to be more specific we draw as many red lines as elements are in the set.  Generically we draw only one line which stands for a multiplicity of lines. The same remark applies to the other basic components of our graphical constructions.

\item The purple diamond with a line attached to it represents a $y$-colored set.

\item The green disk with a line attached to it represents a $z$-colored set.

\item The blue triangle with a line attached to it represents a $h$-colored set.

\item The blob marked with the functor $F:\mathbb{B}^4  \rightarrow C$  represents the application of  $F$ to the disjoint union of the colored sets attached to it. Note that the color, the kind of figures, and even the position of attachment indicates the kind  of variable represented by the various elements attached to the blob $\ F. \ $ Figure \ref{f3} shows the diagram representing an application of the functor $\ F\star G$.

\begin{figure}[t]
\centering
\begin{tabular}{|p{5cm}|p{5cm}|p{5cm}|}\hline
\begin{center}
\includegraphics[width=5cm]{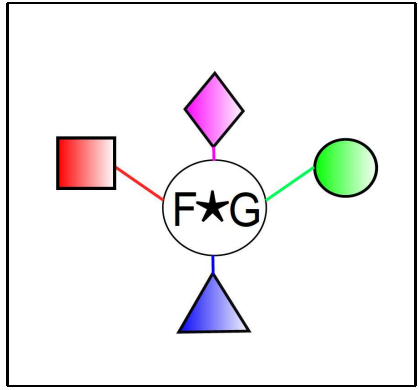}
\end{center}
\\\hline
\end{tabular}
\caption{Diagrammatic representation of an application of the $\star$-product $F\star G$.}\label{f3}
\end{figure}

\item The black disk represents the application of the functor $\ \mathbb{L}(y_3) \otimes \mathbb{L}(h_4) \otimes
(z_1 \sqcup x_2)_{y_3 \sqcup h_4}^{y_3}\ $ to the sets attached to it, considered as an ordered tuple of sets using the counter clockwise cyclic order and starting from the set attached at the West position.
\item A double arrow line represents the applications of the functor $\ \mathbb{B}^2  \rightarrow C\ $ sending $\ (a,b) \ $ to  $\ [a,b \sqcup b], \ $ where $\ a \ $ and $\ b\ $ are the incoming and outgoing sets linked by the arrow.
\end{itemize}

\begin{prop}{\em The product $\ F\star G\ $ is  represented, explicitly, by the graph in Figure \ref{f4}.
}
\end{prop}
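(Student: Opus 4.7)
The plan is to verify the proposition by systematically decoding Figure \ref{f4} and matching its ingredients term by term with the sum defining $F \star G$ in Definition \ref{fp}. Since the graphical dictionary has been fixed in Figure \ref{f2} and in the bullet list immediately preceding the statement, the verification is largely a bookkeeping exercise rather than a computation: each colored edge, each blob, each double arrow, and the central black disk is already stipulated to represent a particular functorial operation, so it only remains to check that the graph arranges these operations into precisely the expression in Definition \ref{fp}.

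First I would label the thirteen edge-sets of Figure \ref{f4} with the names $x_1, x_2, y_1, y_2, y_3, z_1, z_2, h_1, h_2, h_3, h_4, h_5, h_6$. The external (dangling) colored edges should sum by disjoint union to $x$, $y$, $z$, $h$ respectively, recovering the four partition conditions of Definition \ref{fp}. The two blobs are labeled $F$ and $G$, and the colored lines attached to each, read according to the color conventions (red $=x$, purple $=y$, green $=z$, blue $=h$), should supply the arguments $(x_1, y_1 \sqcup h_5, z_1 \sqcup h_3, h_1)$ to $F$ and $(x_2 \sqcup h_3, y_2 \sqcup h_6, z_2, h_2)$ to $G$; this yields the tensor factor $F(\ldots)\otimes G(\ldots)$ of Definition \ref{fp}.

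Next I would identify the auxiliary functorial decorations of the diagram. The two double-arrow lines, one running from $h_5$ to $x_2$ and one from $h_6$ to $z_1$, represent by convention $[h_5, x_2 \sqcup x_2]$ and $[h_6, z_1 \sqcup z_1]$, which are the first two tensor factors of $\mathbb{M}$. The central black disk is attached, in counter-clockwise order starting from the West position, to the sets $z_1, y_3, x_2, h_4$, so by the convention fixed for the black disk it encodes $\mathbb{L}(y_3)\otimes \mathbb{L}(h_4)\otimes (z_1\sqcup x_2)_{y_3\sqcup h_4}^{y_3}$, supplying the remaining three factors of $\mathbb{M}$. The internal $h_3$-edge shared between the $F$- and $G$-blobs accounts for the sign $(-1)^{|h_3|}$, and the geometric fact that both $y_3$ and $h_4$ are forced by the black-disk structure to pair with $h_3$ (via the upper index of the $(\bullet)_\bullet^\bullet$-structure) encodes the final constraint $|y_3|+|h_4|=|h_3|$.

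The main obstacle is purely expository rather than mathematical: one must simultaneously track the color, position, and direction of each line in a graph with more than a dozen edges, and confirm that no factor of $\mathbb{M}$ is missing, repeated, or attached to the wrong set. Once the dictionary is verified line by line, the proof concludes by observing that summing over all admissible labelings of the diagram, subject to the partition identifications read off from its edges, reproduces exactly the direct sum of Definition \ref{fp}, so the two expressions for $F\star G(x,y,z,h)$ coincide as objects of $C$.
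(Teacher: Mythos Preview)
Your proposal is correct and follows essentially the same approach as the paper's own proof: a component-by-component decoding of Figure \ref{f4} against the tensor factors of Definition \ref{fp}, checking partitions, the $F$- and $G$-blobs with their attached colored sets, the two double arrows, the black disk, the sign from $h_3$, and the constraint $|y_3|+|h_4|=|h_3|$. The only minor difference is that you attempt to derive the constraint $|y_3|+|h_4|=|h_3|$ from the black-disk structure itself, whereas the paper simply records it as a separate stipulated condition; this is a cosmetic rather than substantive variation.
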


\begin{figure}[t]
\centering
\begin{tabular}{|p{15cm}|p{1cm}|p{1cm}|}\hline
\begin{center}
\includegraphics[width=15cm]{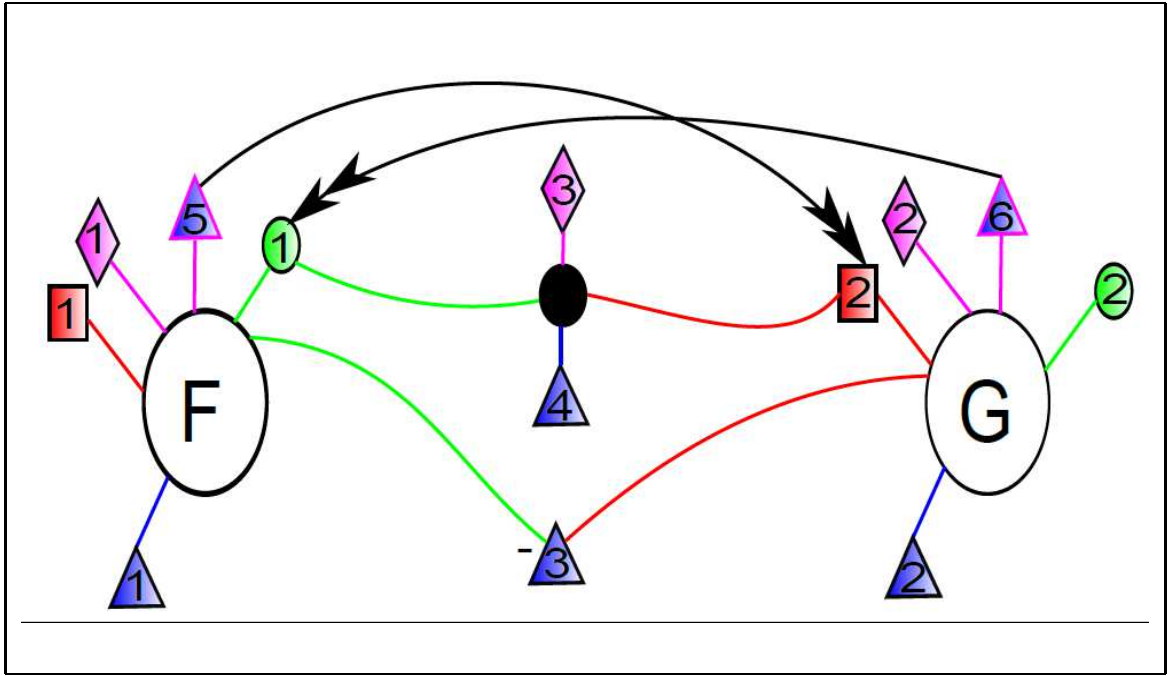}
\end{center}
\\\hline
\end{tabular}
\caption{Graphical representation of the $\star$-product functor $F\star G$.}\label{f4}
\end{figure}

\begin{proof} We have to check that the various components of the graph from Figure  \ref{f4} are in correspondence with
the $\ \otimes$-terms of the product $\ F\star G\ $ as given in Definition  \ref{fp}. We proceed to analyze the various components
of our graph.
\begin{itemize}
\item The red squares numbered $\ 1 \ $ and $\ 2 \ $ represent the partition of the $x$-colored set in two blocks.
\item The purple diamonds numbered $\ 1, \ 2 \ $ and $\ 3 \ $ represent the partition of the $y$-colored set in three  blocks.
\item The green disks numbered $\ 1 \ $ and $\ 2 \ $ represent the partition of the $z$-colored set in two blocks.
\item The blue triangles numbered $\ 1 \ $ trough $\ 5 \ $ represent the partition of the $h$-colored set in five blocks.
\item The blob marked by $\ F \ $ represents the application of the functor $\ F \ $ to the sets attached to it, which are $\ x_1, \ y_1 \sqcup h_5 \ $
(where the block $\ h_5 \ $ changes from a blue $z$-color to a purple $y$-color), $\ z_1 \sqcup h_3, \ $ and $\ h_1$.
Note that the block $\ h_3\ $ becomes a set of  bi-colored edges starting as green $y$-edges and ending up as a red $x$-edges.
\item The blob marked $\ G \ $ represents the application of the functor $\ G \ $ to the sets attached to it, namely $\ h_3 \sqcup x_2, \ y_2 \sqcup h_2  \ $
(again the block $\ h_2\ $ changes from a blue $z$-color to a purple $y$-color), $\ z_2, \ $ and $\ h_2$.
\item The black disk with the various edges attached to it represents the application of the functor $\ \mathbb{L}(y_3) \otimes \mathbb{L}(h_4)\otimes (z_1 \sqcup x_2)_{y_3 \sqcup h_4}^{y_3}.$
\item The double pointed arrows represent $\ [h_5,x_2 \sqcup x_2] \ $ and $\ [h_2, z_1 \sqcup z_1], \ $ respectively.
\item The negative sign comes from the cardinality of the block $\ h_3.$
\item The condition $\ |y_3|  \ + \  |h_4| \ =  \ |h_3|, \ $ implies that if $\ h_3 \ $ is empty, then  $\ y_3 \ $ and $\ h_4\ $ are also empty;
and that if the block $\ h_3 \ $ is not empty, then $\ y_3 \ $ and $\ h_4 \ $ can not be both empty.
\end{itemize}

\end{proof}

Next we put the graphical notation in action, thereby showing that it is an effective computational tool.
Recall that the colored singular functors $\ X,Y,Z, H\ $ output $\ 0 \ $ unless applied to a set of cardinality $\ 1 \ $ of the respective color where
it outputs $\ 1.$

\subsection{Graphs and the defining identities of $\ \widehat{U}_{h}(\mathfrak{sl}_2) \ $}

Let us study  the graphical interpretation of the defining identities for $\ \widehat{U}_{h}(\mathfrak{sl}_2). \ $

\begin{prop}{\em Consider the singular functors $\ X, Y, Z, H \in [ \mathbb{B}^4, C] .\ $ We have that
\begin{enumerate}
  \item  The functor $\ Y\star X \ $ is given by  $$Y\star X(x,y,z,h)  \ = \  \left\{
\begin{array}{cc}
1 & \mathrm{if} \ \ \ |x|=|y|=1, \ z=h=\emptyset,\\
1 & \mathrm{if} \ \ \ |x|=|h|=1, \ y=z=\emptyset,\\
0 &  \mbox{otherwise}.
\end{array}\right. $$ Therefore we have that $\ y\star x \ = \ |Y\star X|\ = \ xy + 2xh.$

  \item The functor $\ Z\star X \ $ is given by  $$Z\star X(x,y,z,h)  \ = \   \left\{
\begin{array}{cc}
1 & \mathrm{if} \ \ \ |x|=|z|=1, \ y=h=\emptyset,\\
-1 & \mathrm{if} \ \ \ |y|=|h|=1, \ x=z=\emptyset,\\
0 &  \mbox{otherwise}.
\end{array}\right. $$ Therefore we have that $\ z\star x \ = \ |Z\star X| \ = \ xz - yh.$

\item The functor $\ Z\star Y \ $ is given by  $$Z\star Y(x,y,z,h) \ = \ \left\{
\begin{array}{cc}
1 & \mathrm{if} \ \ \ |b|=|c|=1, \ a=d=\emptyset,\\
2 & \mathrm{if} \ \ \ |b|=|d|=1, \ a=c=\emptyset,\\
0 &  \mbox{otherwise}.
\end{array}\right. $$ Therefore we have that $\ z\star y \ =  \ |Z\star Y| \ = \ yz + 2yh.$

\end{enumerate}
}
\end{prop}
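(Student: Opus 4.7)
The plan is to compute each of the three $\star$-products by unpacking Definition \ref{fp} directly, exploiting the fact that when both $F$ and $G$ are colour-singleton functors the sum over the thirteen auxiliary blocks collapses to only a couple of surviving tuples. In every case, requiring $F$ to be non-zero on $(x_1,y_1\sqcup h_5,z_1\sqcup h_3,h_1)$ forces three of these four slots to be empty and pins the cardinality of the remaining slot to $1$; similarly for $G$ on $(x_2\sqcup h_3,y_2\sqcup h_6,z_2,h_2)$. Combined with the linking condition $|y_3|+|h_4|=|h_3|$, one is left with only a handful of non-zero contributions, after which the $\otimes$-factors of $\mathbb{M}$ can be evaluated by hand.

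For $Y\star X$ and $Z\star Y$ the constraints immediately force $h_3=\emptyset$, hence also $y_3=h_4=\emptyset$, and the sum reduces to a binary choice: either the single output element of the produced variable (here $y$ or $z$) lives in the ``direct'' block $y_1$ (resp.\ $y_2$), or it lives in the ``crossed'' block $h_5$ (resp.\ $h_6$). The latter subcase picks up an extra factor of $|[h_5,x_2\sqcup x_2]|=2$ (resp.\ $|[h_6,z_1\sqcup z_1]|=2$) from $\mathbb{M}$, producing the $2xh$- and $2zh$-terms of the defining relations and giving, via Theorem \ref{im}, $y\star x=xy+2xh$ and $z\star y=yz+2zh$.

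The product $Z\star X$ is the only case in which $h_3$ itself can be a singleton; when it is, the prefactor $(-1)^{|h_3|}$ supplies the minus sign in $z\star x$. The constraint $|y_3|+|h_4|=1$ then splits this branch into two subcases, and the factor $(z_1\sqcup x_2)^{\bullet}_{y_3\sqcup h_4}$ of $\mathbb{M}$ selects the one that matches the proposed statement while killing the other through the vanishing value $(0)^1_1=0$; this is precisely the cancellation that prevents a spurious $h^2$ contribution and leaves only the $-yh$ term, yielding $z\star x=xz-yh$.

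The main obstacle is not conceptual but careful bookkeeping: one has to track, in each of the three products, which of the thirteen blocks are forced empty and which retain a singleton, follow how the ``crossing'' blocks $h_3,h_5,h_6$ change colour when substituted into the arguments of $F$ and $G$, and evaluate every $\otimes$-factor of $\mathbb{M}$ (two sets of maps, two linear orderings, and one $(\bullet)^{\bullet}_{\bullet}$-structure) on a singleton or empty set. Once these ingredients are lined up, applying the valuation map $|\cdot|$ (or invoking Theorem \ref{im}) converts the functorial computation into the three asserted formal-power-series identities, which are exactly the defining relations of $\widehat{U}_h(\mathfrak{sl}_2)$ recalled in the introduction.
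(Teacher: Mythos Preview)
Your proposal is correct and follows essentially the same route as the paper: a direct case analysis of Definition \ref{fp}, using that the singleton functors $X,Y,Z$ force all but one slot to be empty on each side, which trims the thirteen auxiliary blocks down to one or two surviving configurations. The paper phrases this via the graphical calculus of Figure \ref{f5}, but the underlying bookkeeping is identical: for $Y\star X$ and $Z\star Y$ the only freedom is whether the needed $y$-element is ``native'' or comes from $h_5$ (resp.\ $h_6$), the latter contributing the factor $|[h_5,x_2\sqcup x_2]|=2$ (resp.\ $|[h_6,z_1\sqcup z_1]|=2$); for $Z\star X$ the block $h_3$ may be a singleton, supplying the sign, and the subcase $|h_4|=1,\ |y_3|=0$ is killed by $(0)^1_1=0$, exactly as you say and as the paper spells out when ruling out $|h|=2$.
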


\begin{proof} The reader should have Figure \ref{f5} in mind as we develop our arguments.

\begin{figure}[t]
\centering
\begin{tabular}{|p{4.5cm}|p{4.5cm}|p{4.5cm}|}\hline
\begin{center}
\includegraphics[width=4.5cm]{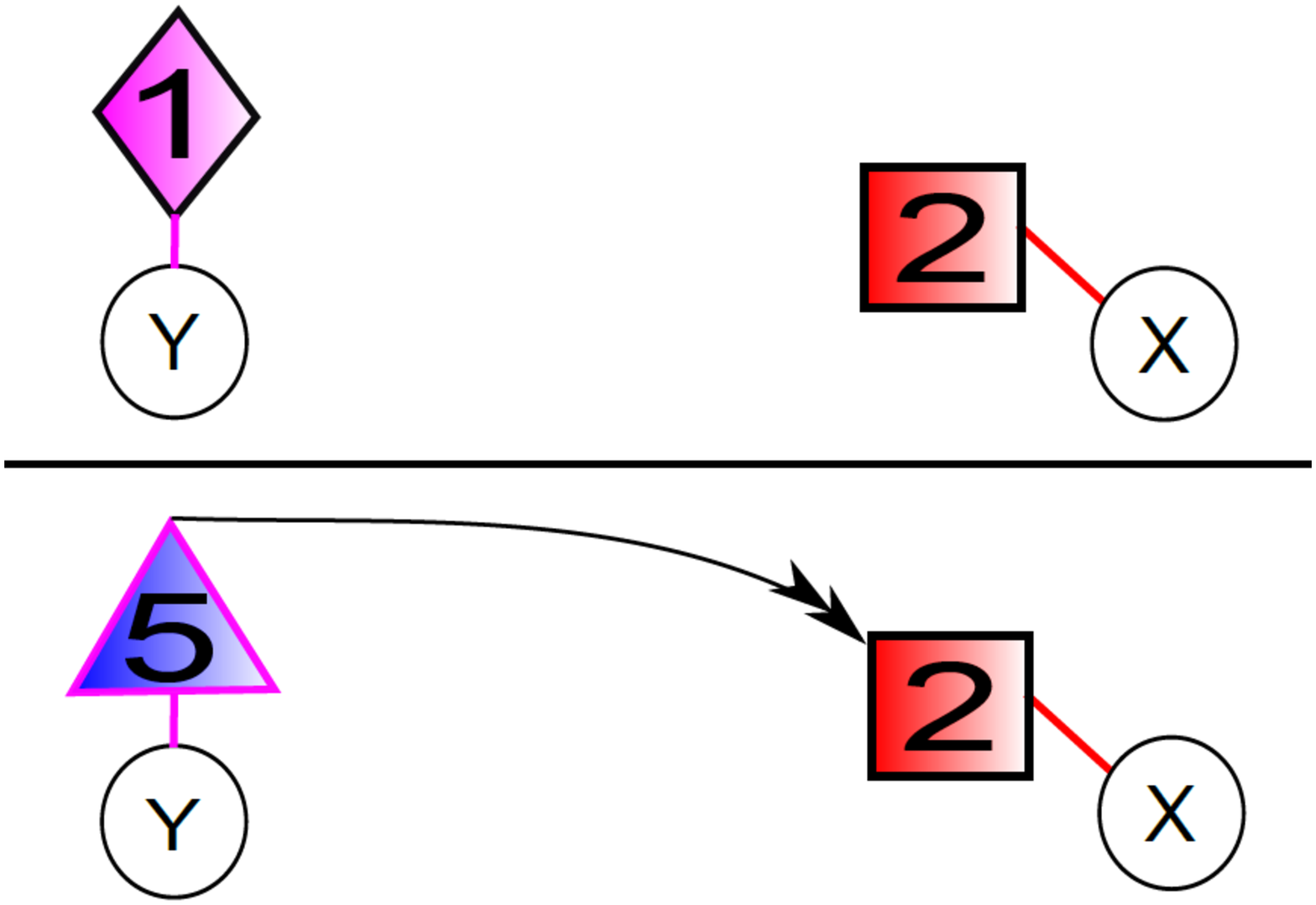}
\par\textbf{$|Y\star X|= xy  + 2xh.$}
\end{center}
&
\begin{center}
\includegraphics[width=4.5cm]{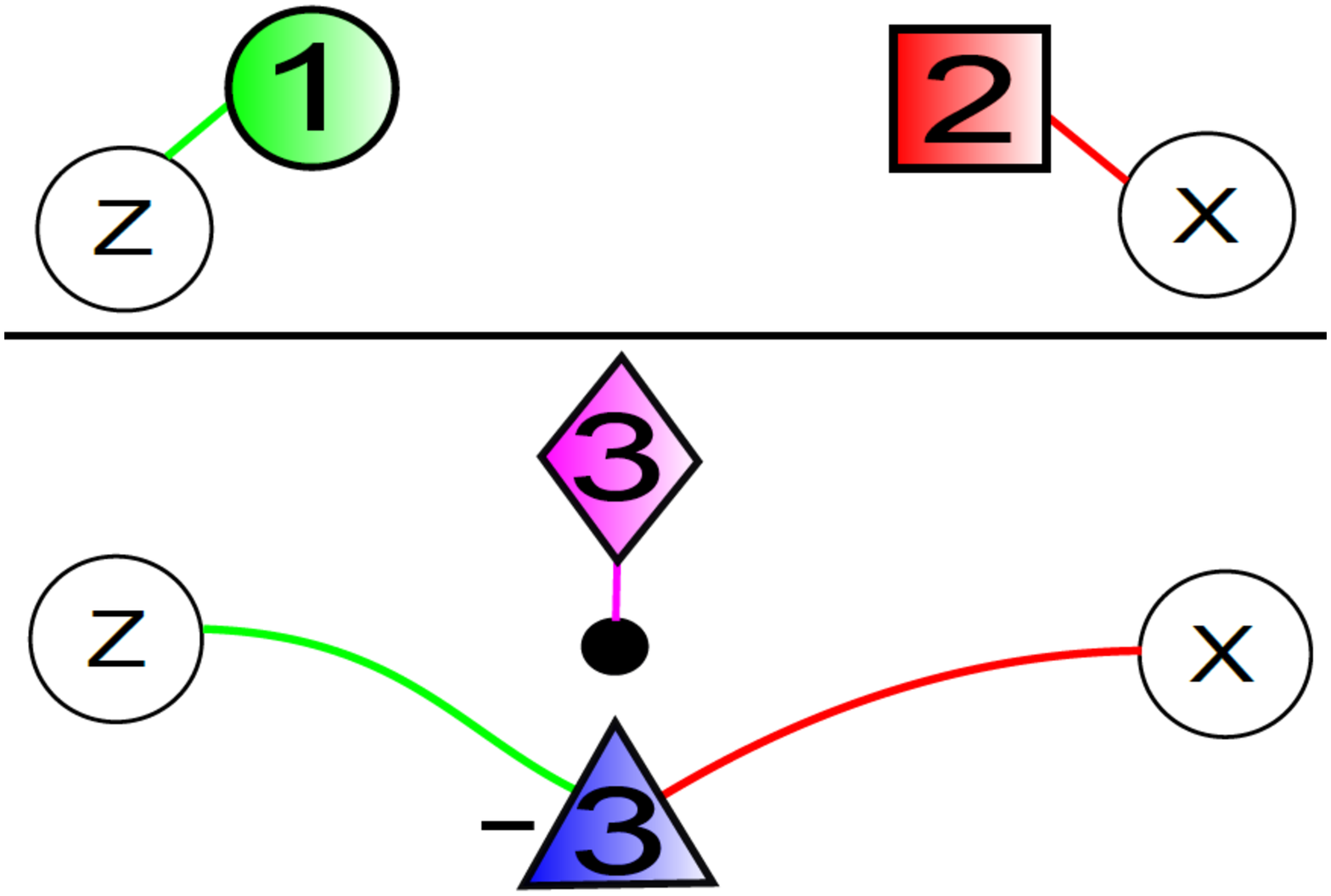}
\par\textbf{$|Z\star X|= xz - yh.$}

\end{center}
&
\begin{center}
\includegraphics[width=4.5cm]{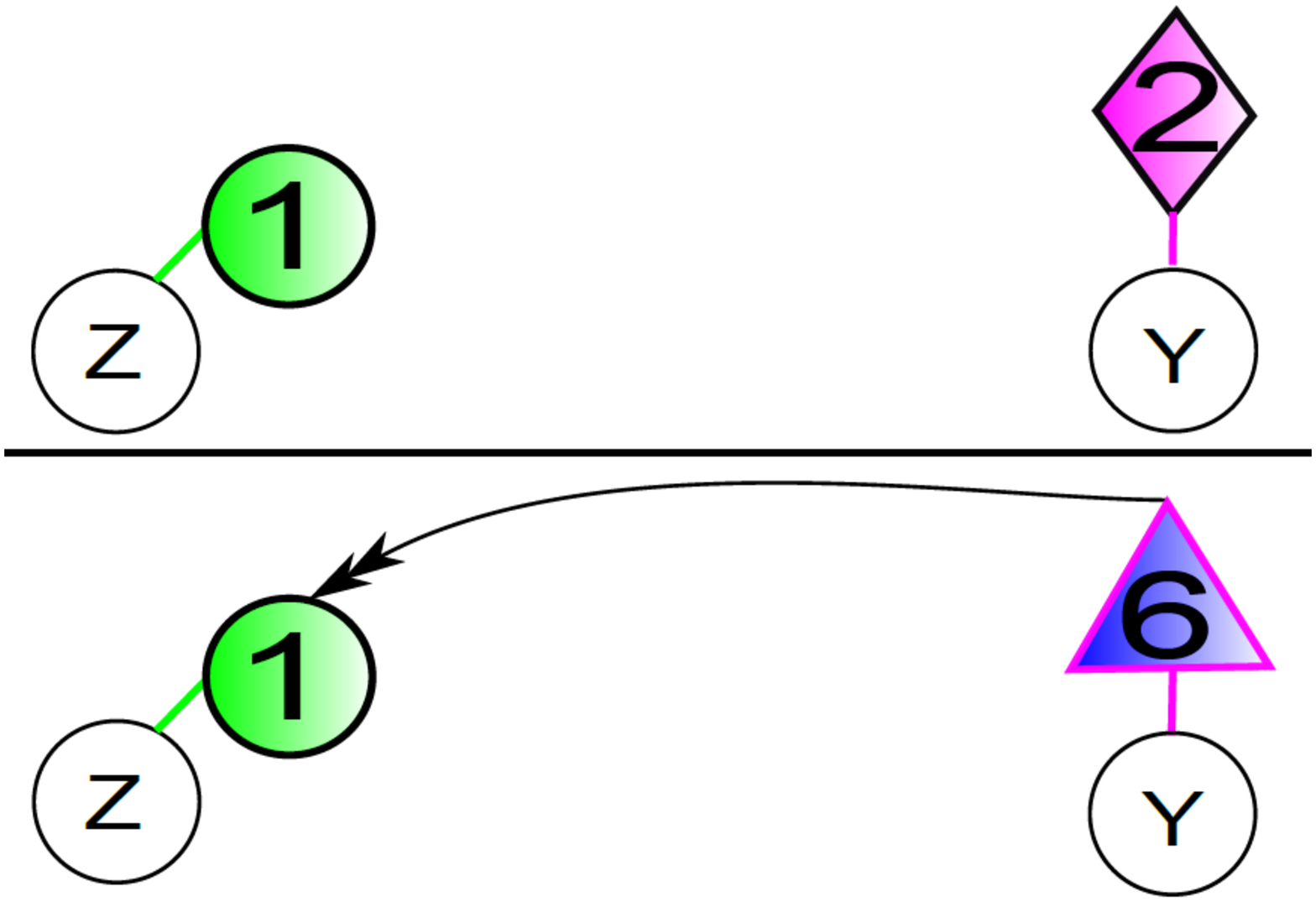}
\par\textbf{$|Z\star Y|= yz + 2zh.$	}

\end{center}
\\\hline
\end{tabular}
\caption{Graphical representation for the defining identities of $\ \widehat{U}_{h}(\mathfrak{sl}_2). \ $ }\label{f5}
\end{figure}

\begin{enumerate}

\item  By the definition of the singular species there must be exactly one $y$-element attached to the $Y$-blob, and exactly one
$x$-element attached to the $X$-blob.  This can happen in two ways: either we originally have the required elements, or we had an $h$-element and
a $x$-element. The $h$-element falls into the third block of the decomposition of $\ h \ $ and thus becomes the needed $y$-element. In this case we must also consider the maps from $\ h_5 \ $  to the disjoint union of two copies of the  $x$-set, yielding the required factor of $\ 2$.\\

\item Again we have two cases: either we have an $x$-element and a $z$-element, or we have an $h$-element and a $y$-element.
The $h$-element falls in block number $\ 3$ \  and fills the place of the $\ z \ $ and $\ x\ $ elements needed. The $y$-element goes to the third block, its attached to the black disk yielding a factor of $\ 1.$ The contribution of a graph with $\ |h|\geq 2 \ $ is equal to $\ 0.\ $ Indeed, the only active blocks for the partition of $\ h \ $ are $\ h_3\ $ and $\ h_4,\ $ and we know that $\ |h_3|\leq 1\ $ (otherwise the applications of the functor $\ Z\ $ yields a $\ 0 \ $ factor.)
Now if $\ |h|\geq 3, \ $ then $\ |h_4| = |h| -|h_3| \geq 2 > |h_3|, \ $ a contradiction since we know that $\ |h_4|\leq |h_3|. \ $
If $\ |h|=2\ $ and $\ |h_3|=|h_4|=1,\ $ then $\ |y_3|=0 \ $ and $\  (z_1+x_2)_{y_3+h_4}^{h_4}   \ $ gives rise to a factor of  $\ (0+0)_{0+1}^{1} =0. $ \\

\item   There must be exactly one $z$-element attached to the $Z$-blob, and exactly one
$x$-element attached to the $X$-blob.  So either we are given the required elements, or we had an $h$-element and
a $x$-element. The $h$-element necessarily falls into the block $\ h_6 \ $ and thus becomes the required $y$-element. We must also consider the maps from $\ h_6 \ $  to the disjoint union of two copies of the  $z$-set, yielding the required factor of $\ 2$.

\end{enumerate}

\end{proof}

\subsection{Graphical representation of the identities from Lemma \ref{bd}}

In this subsection  we study the graphical representation of the identities in Lemma \ref{bd}. We begin with an example, namely, we consider
the $\star$-product
$$\frac{Z^{2}}{2!}\star \frac{X^{2}}{2!}$$
for which we adopt the multiple-lines representation to be fully explicit. Figure \ref{f6} displays the various graphs that arise in this computations together with their associated algebraic counterpart. Our goal is to construct all graphs that can be built as in Figure \ref{f4} with $\ F=\frac{Z^{2}}{2!} \ $ and
$\ G=\frac{X^{2}}{2!}, \ $ proceeding in increasing order in the cardinality of $\ h$.

\begin{figure}[t]
\centering
\begin{tabular}{|p{4cm}|p{4cm}|p{4cm}|}\hline
\begin{center}
\includegraphics[width=4cm]{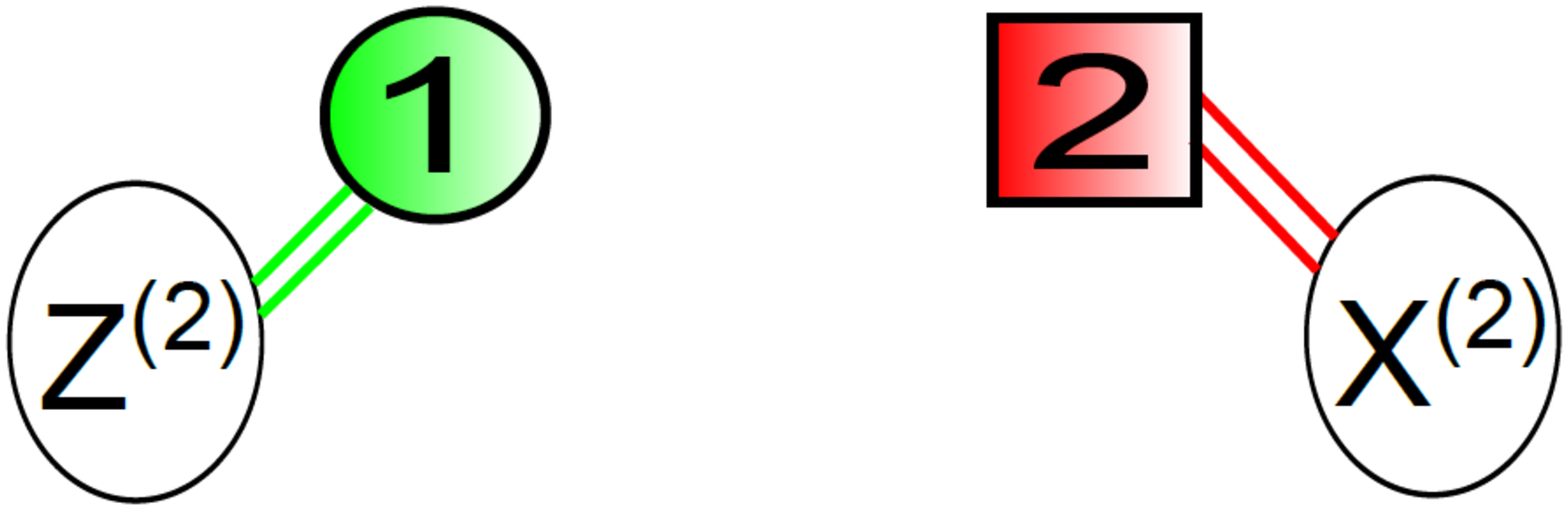}
\par\textbf{$\frac{x^{2}}{2!}\frac{z^{2}}{2!}$}
\end{center}
&
\begin{center}
\includegraphics[width=4cm]{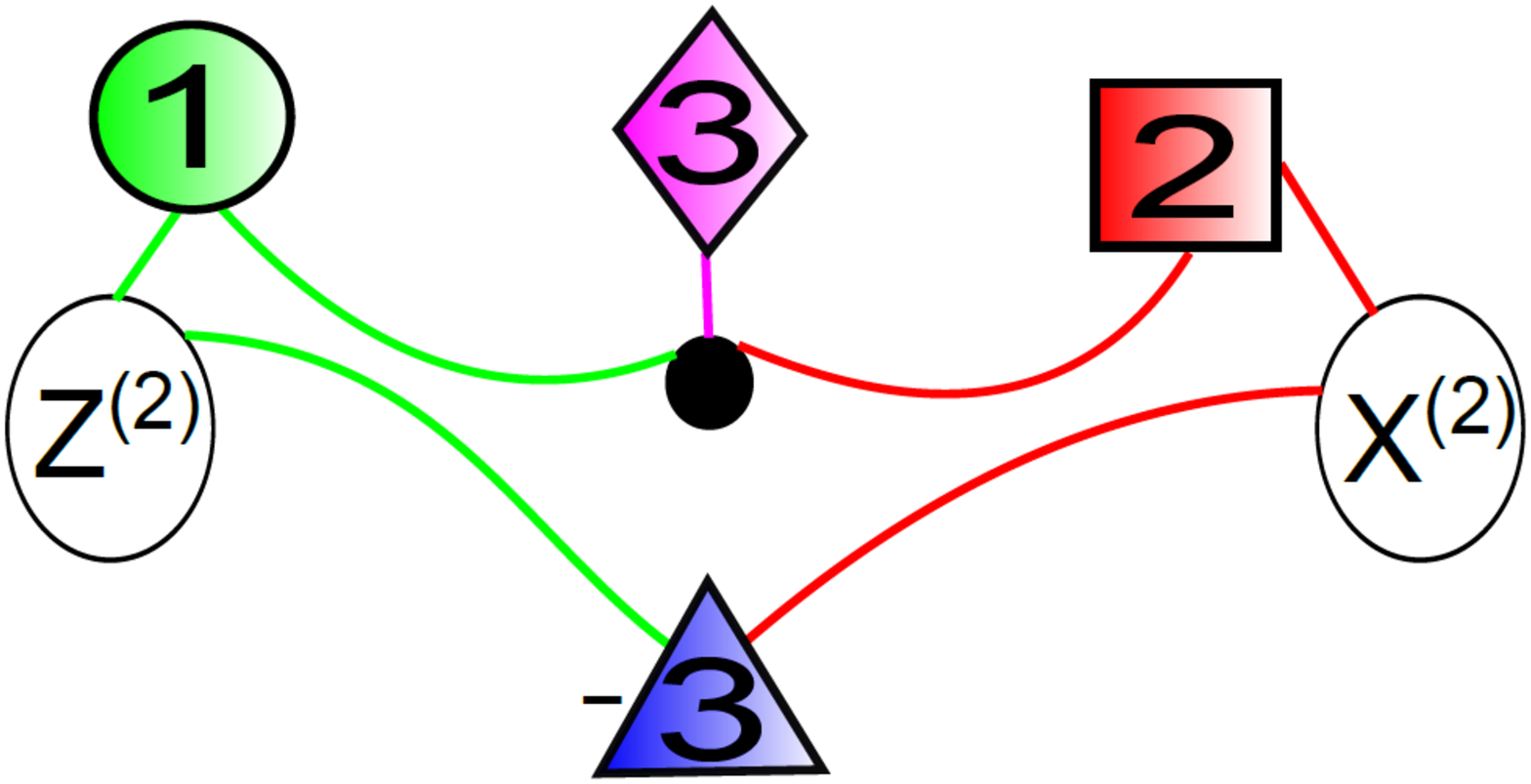}
\par\textbf{$-xyzh$}

\end{center}
&
\begin{center}
\includegraphics[width=4cm]{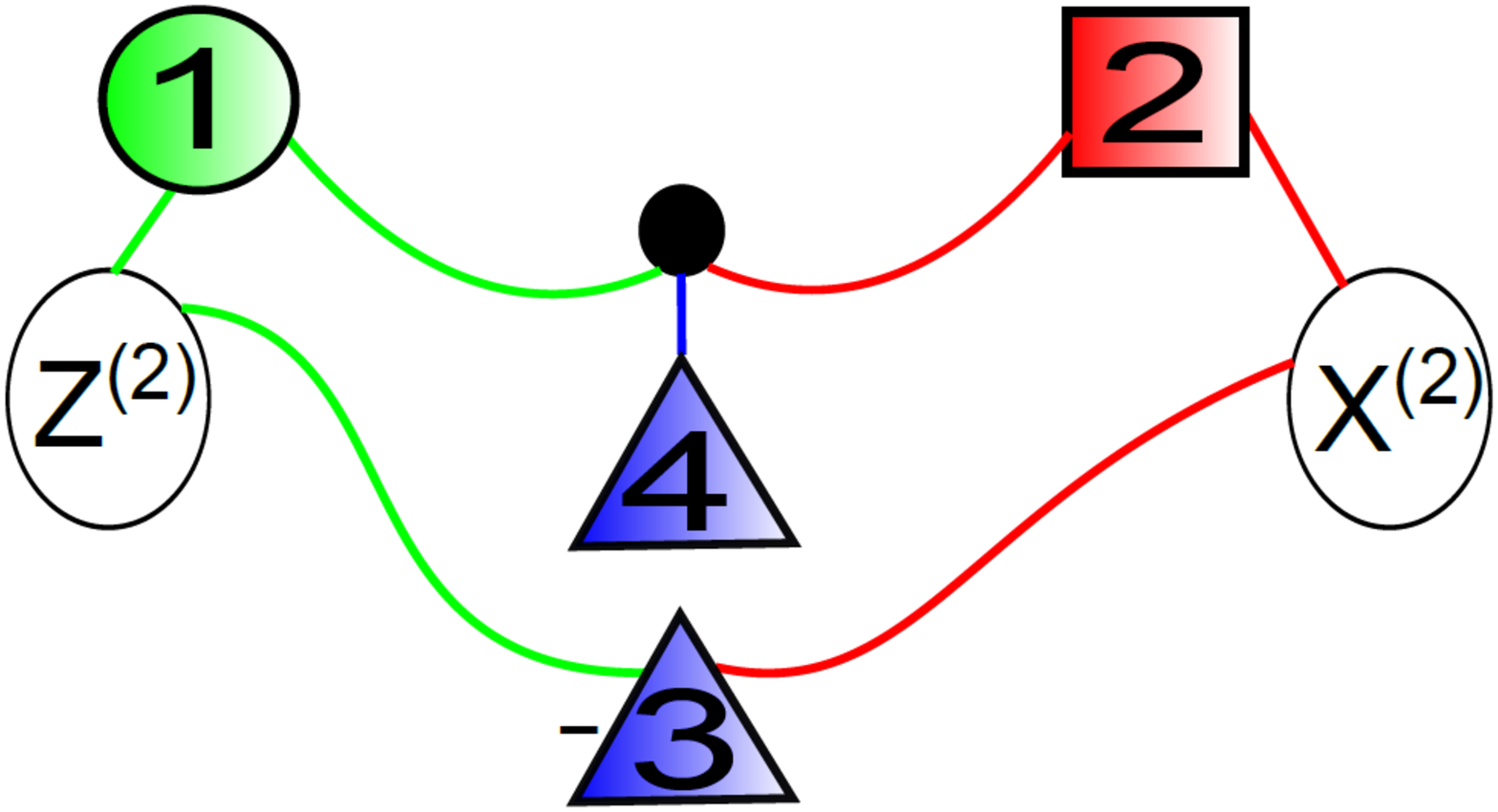}
\par\textbf{$-4xz\frac{h^{2}}{2!}$	}

\end{center}
\\\hline
\end{tabular}\\
\begin{tabular}{|p{4cm}|p{4cm}|p{4cm}|}\hline
\begin{center}
\includegraphics[width=4cm]{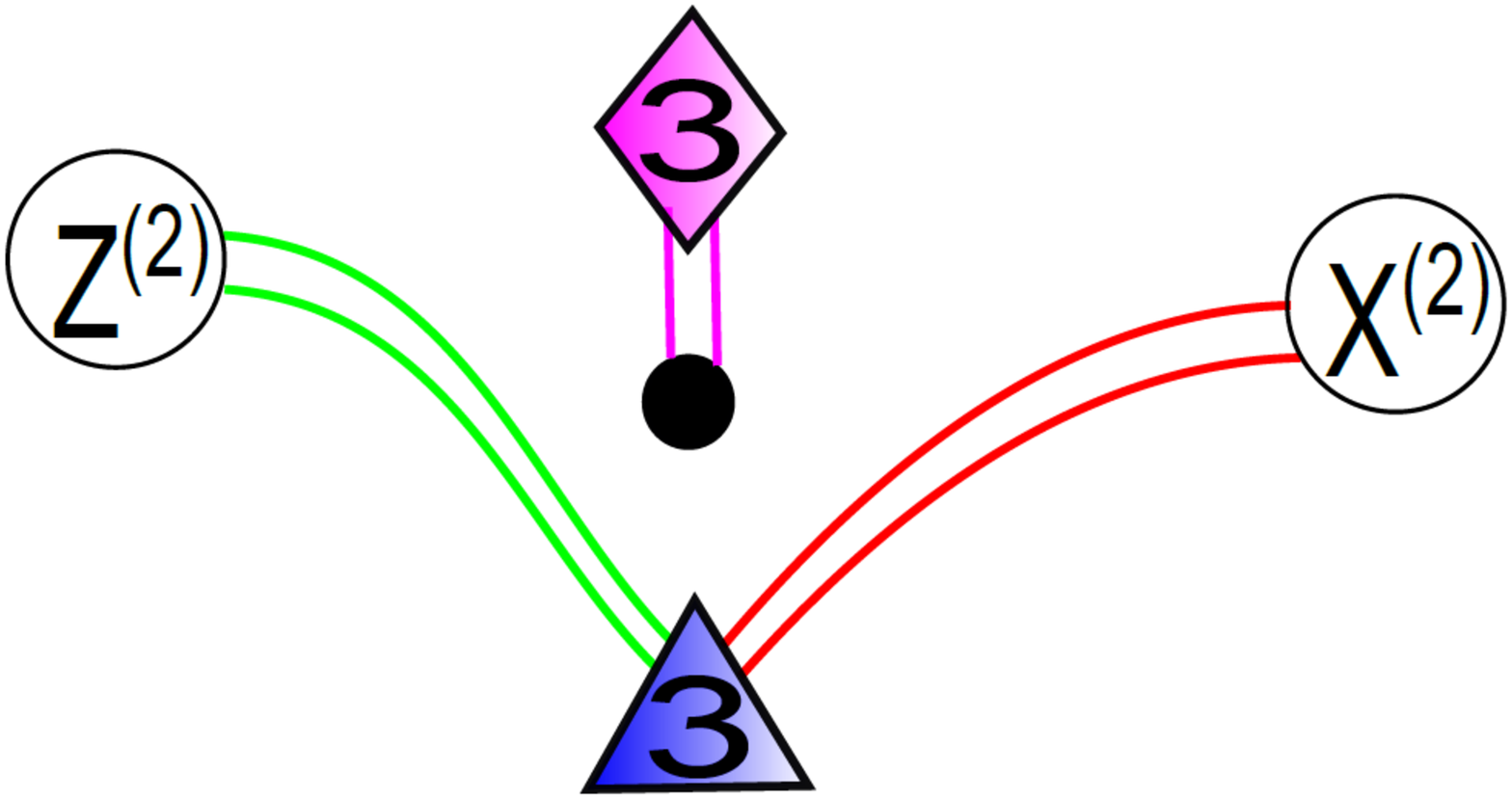}
\par\textbf{$2\frac{y^{2}}{2!}\frac{h^{2}}{2!}$}
\end{center}
&
\begin{center}
\includegraphics[width=4cm]{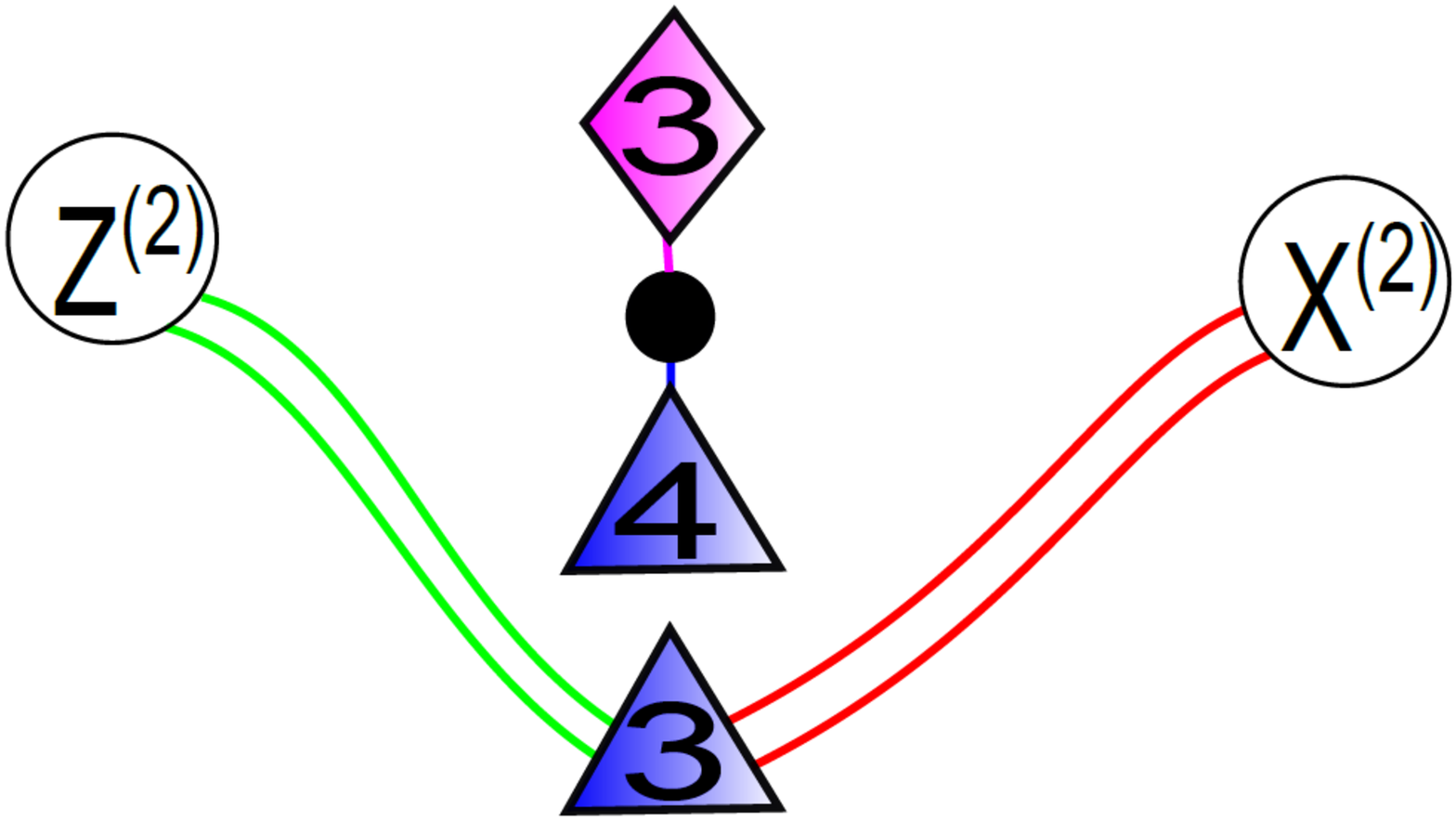}
\par\textbf{$3y\frac{h^{3}}{3!}$}

\end{center}

\\\hline
\end{tabular}\\

\caption{Graphical interpretation of the $\star$-product functor $\ \frac{Z^{2}}{2!}\star\frac{X^{2}}{2!}.$}\label{f6}
\end{figure}

\begin{prop}{\em The functor $\ \frac{Z^{2}}{2!}\star\frac{X^{2}}{2!} \in [ \mathbb{B}^4, C] \ $ is such that
$$\bigg|\frac{Z^{2}}{2!}\star\frac{X^{2}}{2!}(x,y,z,h)\bigg| \ = \  \left\{
\begin{array}{ccc}
1 & \mathrm{if} & |x|=|z|=1, \ y=h=\emptyset,\\
-1 & \mathrm{if}  & |x|=|y|=|z|=|h|=1,\\
-4 &  \mathrm{if} & |x|=|z|=1, \ |h|=2, \ y=\emptyset,\\
2 & \mathrm{if} & |y|=|h|=2, \ x=z=\emptyset,\\
3 & \mathrm{if} & |y|=1, \ |h|=3, \ x=z=\emptyset,\\
0 &  \mbox{otherwise}. &
\end{array}\right. $$ Therefore we have that
$$\frac{z^{2}}{2!}\frac{x^{2}}{2!}  \ = \ \bigg|\frac{Z^{2}}{2!}\frac{X^{2}}{2!}\bigg|  \ = \
\frac{x^{2}}{2!}\frac{z^{2}}{2!} \ - \  xyzh \ - \ 4xz\frac{h^{2}}{2!}
\ + \ 2\frac{y^{2}}{2!}\frac{h^{2}}{2!} \ + \  3y\frac{h^{3}}{3!} .$$

}
\end{prop}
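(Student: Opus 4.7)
My plan is to apply Theorem~\ref{im} by specializing the graphical $\star$-product of Figure~\ref{f4} to $F = \frac{Z^2}{2!}$ and $G = \frac{X^2}{2!}$, and enumerating the surviving configurations. The first observation is that the divided-power functor $\frac{Z^2}{2!}$, applied to $(x_1,y_1\sqcup h_5,z_1\sqcup h_3,h_1)$, vanishes unless $x_1=y_1=h_5=h_1=\emptyset$ and $|z_1|+|h_3|=2$; analogously, $\frac{X^2}{2!}$ forces $y_2=h_6=z_2=h_2=\emptyset$ and $|x_2|+|h_3|=2$. What remains is a small family of configurations in $(x_2,z_1,y_3,h_3,h_4)$, parametrised by $|h_3|\in\{0,1,2\}$ together with a split $(|y_3|,|h_4|)$ of $|h_3|$, giving the six cases
$$(|h_3|,|y_3|,|h_4|)\ \in\ \{(0,0,0),\,(1,1,0),\,(1,0,1),\,(2,2,0),\,(2,1,1),\,(2,0,2)\}.$$

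I would then tabulate the contribution of each case. Since the partitions of $x$, $y$ and $z$ are uniquely determined by their sizes, and since $[h_5,x_2\sqcup x_2]$ and $[h_6,z_1\sqcup z_1]$ both contribute $1$ (as $h_5=h_6=\emptyset$), the functor value in each case factors as
$$(-1)^{|h_3|}\,\binom{|h_3|+|h_4|}{|h_3|,|h_4|}\;|y_3|!\;|h_4|!\;\bigl(|z_1|+|x_2|\bigr)^{|h_4|}_{|y_3|+|h_4|},$$
where the last factor is the black-disk count with upper index $|h_4|$, as dictated by the matching term $(\gamma_1+\alpha_2)^{\rho_4}_{\beta_3+\rho_4}$ of Theorem~\ref{mt}. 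Converting back into the divided-power basis should recover the five listed nonzero monomials; the case $(2,0,2)$ should vanish.

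The key subtlety lies in the Pochhammer-like black-disk factor: the identities $(0)^1_2 = 1$ and $(0)^2_2 = 0$ are what respectively produce the coefficient $3$ of the $y h^3$ term (together with the $\binom{3}{2,1}$ ways of splitting $h$ between $h_3$ and $h_4$) and the vanishing of the candidate $h^4$ term coming from the $(2,0,2)$ case. The main obstacle is thus the careful bookkeeping of the six cases: getting the upper index of $(\bullet)^{\bullet}_{\bullet}$ right, tracking the multinomials in $h$ correctly, and verifying the divided-power normalisation. As an independent sanity check, applying Proposition~\ref{bd} algebraically with $a=b=2$ yields precisely the same five-term expansion, confirming that the combinatorial enumeration reconstructs the product.
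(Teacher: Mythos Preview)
Your proposal is correct and follows essentially the same route as the paper's own proof: both reduce to the observation that, for $F=\frac{Z^2}{2!}$ and $G=\frac{X^2}{2!}$, only the blocks $x_2,z_1,y_3,h_3,h_4$ can be nonempty, and then enumerate the finitely many admissible configurations. The paper organises the case analysis by $|h|=0,1,2,3,4,\geq 5$ (with two subcases at $|h|=2$), while you organise it by the triple $(|h_3|,|y_3|,|h_4|)$; these are the same six cases, and your closed formula for the contribution of each case, together with the evaluations $(0)^1_2=1$ and $(0)^2_2=0$, reproduces exactly the paper's computations.
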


\begin{proof}
The reader should have Figure \ref{f6} in mind as we develop our arguments.
\begin{itemize}
  \item The only graph we can build with  $\ |h|=0 \ $ is the one with two  $z$-lines attached to the left blob, and two $x$-lines attached to the right blob.
  \item There is only one graph with $\ |h|=1. \ $ Indeed, the unique $h$-colored element must necessarily lie in the block $\ h_3, \ $  given rise a $z$-colored edge connected to the left blob, as well as a $x$-colored edge connected to the right blob. Since $\ |h_3|=1,\ $ then either
      $\ |y_3|=1\ $ and $\ |h_4| =0, \ $ or $\ |y_3|=0\ $ and $\ |h_4|=1. \ $ The later option is not allowed since we are assuming that $\ |h|=|h_3|=1. \ $ Thus we have that $ \ (z_1+x_2)_{y_3+h_4}^{h_4} \ $ gives rise to a factor of  $\  (1+1)_{1}^{0} =1. \ $
  \item There are two cases with $\ |h|=2. \ $ Assume first that $\ |h_3|=|h_4|=1, \ $ then we obtain a factor of $\ 2 \ $ accounting for the partitions  $\ h \ $ in two blocks. Also we have that $ \ (z_1+x_2)_{y_3+h_4}^{h_4} \ $ gives rise to a factor of  $\ (1+1)_{1}^{1} =2. \ $ Thus we obtain the desired factor of $\ -4$.
  \item Next we assume that $\ |h|=|h_3|=2 \ $ which implies that $\ |h_4|=0 \ $ and $\ |y_3|=2. \ $ In this case $ \ (z_1+x_2)_{y_3+h_4}^{h_4} \ $ gives rise to a factor of  $\ (0+0)_{2+0}^{0} =  1.\ $
  \item Consider the case $\ |h|=3.\ $ We have that $\ |h_3|+|h_4|=3\ $ and $\ |y_3|+|h_4|=|h_3|.\ $ If $\ |h_3|=2\ $ and $\ |y_3|=|h_4|=1, \ $ then  $ \ (z_1+x_2)_{y_3+h_4}^{h_4} \ $ gives rise to a factor of  $\ (0+0)_{1+1}^{1} = (0)_2^1 = 0+1 = 1.\ $
  \item If $\ |h|\geq 5,\ $ then $\ |h_4| = |h|-|h_3| \geq 3 > |h_3|\ $ a contradiction since we know that $\ |h_4|\leq |h_3|.\ $ Thus there are no contribution to the product from such graphs.

  \item If $\ |h|=4,\ $ then we must have that $\ |h_3|=|h_4|=2\ $ and $\ |y_3|=0.\ $ Therefore  $ \ (z_1+x_2)_{y_3+h_4}^{h_4} \ $ gives rise to a factor of  $\ (0+0)_{0+2}^{2} =  0.\ $
\end{itemize}

\end{proof}

Next we consider the general case.

\begin{figure}[t]
\centering
\begin{tabular}{|p{8cm}|p{8cm}|p{8cm}|}\hline
\includegraphics[width=8cm]{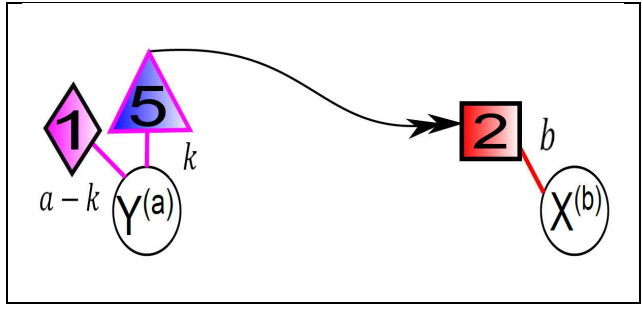}
\\\hline
\end{tabular}
\caption{Graphical interpretation of the $\star$-product functor $\ \frac{Y^{a}}{a!}\star \frac{X^{b}}{b!}.$}\label{f8}
\end{figure}

\begin{thm}\label{bb}
{\em Consider the category $\ ([ \mathbb{B}^4, C] ,\star). \ $ For $\ a,b \in \mathbb{N} \ $ we have that:

\begin{enumerate}
  \item  The functor $\ \frac{Y^{a}}{a!}\star \frac{X^{b}}{b!}\ $ is given by
$$\frac{Y^{a}}{a!}\star \frac{X^{b}}{b!}(x,y,z,h)  \ = \  \left\{
\begin{array}{cc}
[h,x \sqcup x] & \mathrm{if} \ |x|=b, \ |y|+|h|=a, \ z=\emptyset, \\
0 &  \mbox{otherwise}.
\end{array}\right. $$ Therefore we have that
$$ \frac{y^{a}}{a!}\star \frac{x^{b}}{b!} \ = \ \bigg|\frac{Y^{a}}{a!}\star \frac{X^{b}}{b!}\bigg| \ = \ \sum_{k=0}^{a} \; {(2b)^k }\frac{x^{b}}{b!}\frac{y^{a-k}}{(a-k)!}\frac{h^{k}}{k!}.$$

  \item The functor $\ \frac{Z^{a}}{a!}\star \frac{Y^{b}}{b!} \ $ is given by
$$\frac{Z^{a}}{a!}\star \frac{Y^{b}}{b!}(x,y,z,h) \ = \  \left\{
\begin{array}{cc}
[h,z \sqcup z] & \mathrm{if} \ x=\emptyset, \ |y|+|h|=b, \ |z|=a \\
0 &  \mbox{otherwise}.
\end{array}\right. $$ Therefore we have that
$$\frac{z^{a}}{a!}\star \frac{y^{b}}{b!} \ = \ \bigg|\frac{Z^{a}}{a!}\star \frac{Y^{b}}{b!}\bigg|\ = \
\sum_{k=0}^{b} \; (2a)^{k}\frac{y^{b-k}}{(b-k)!}\frac{z^{a}}{a!}\frac{h^{k}}{k!}.$$

\begin{figure}[t]
\centering
\begin{tabular}{|p{8cm}|p{8cm}|p{8cm}|}\hline
\includegraphics[width=8cm]{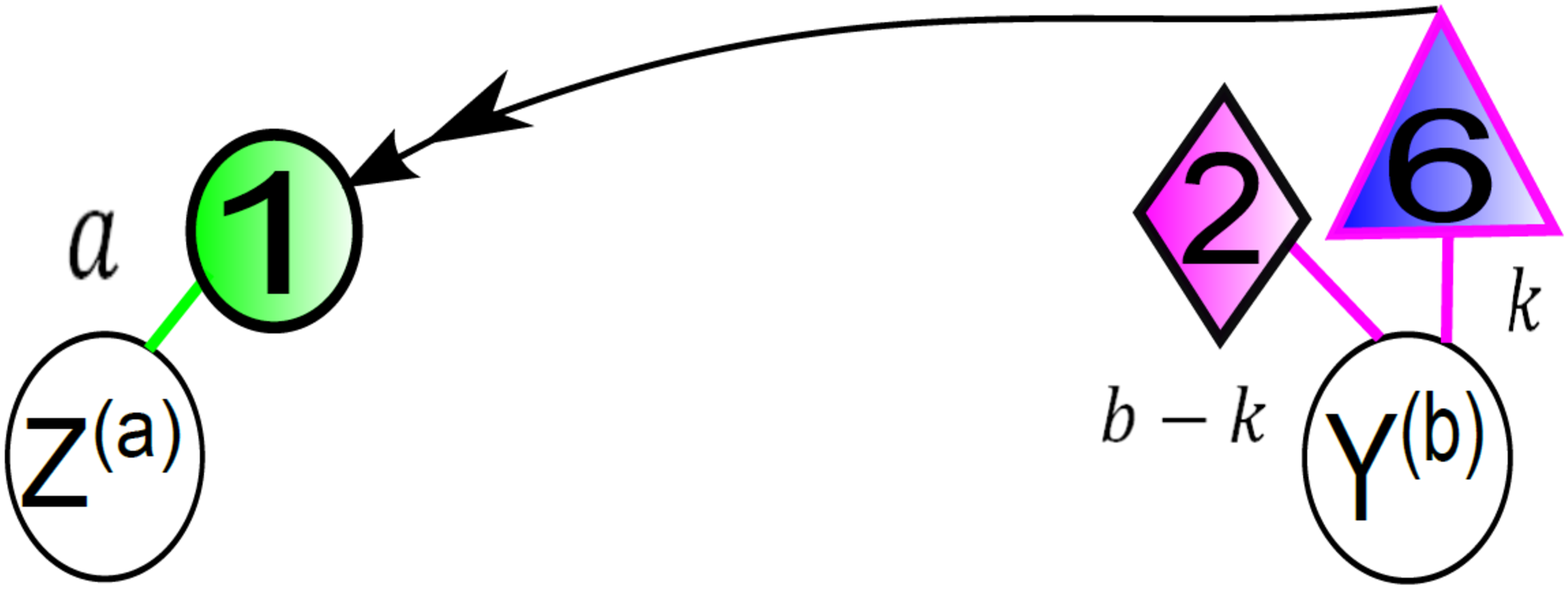}
\\\hline
\end{tabular}
\caption{Graphical interpretation of the $\star$-product functor $\ \frac{Z^{a}}{a!}\star \frac{Y^{b}}{b!}. $}\label{f7}
\end{figure}

\item The functor $\ \frac{Z^{a}}{a!}\star \frac{X^{b}}{b!} \ $ is such that
$$ \frac{Z^{a}}{a!}\star \frac{X^{b}}{b!} (x,y,z,h)  \ = \
\underset{w \sqcup v=h}{\bigoplus}(-1)^{|v|}\mathbb{L}(w)\otimes \mathbb{L}(y)\otimes (x \sqcup z)_{v}^{w} ,$$
where in the sum above the following conditions must be satisfied
$$|v| \leq \mathrm{min}(a,b), \ \ \ \ |x|+ |v|=b, \ \ \ \ |y|+ |w|=|v|,  \ \ \ \ |z|+|v|=a. $$

Therefore we have that $$ \frac{z^{a}}{a!}\star \frac{x^{b}}{b!} \ = \ \bigg|\frac{Z^{a}}{a!}\star \frac{X^{b}}{b!}\bigg|\ = $$
$$ \sum_{0\leq w \leq v \leq \mathrm{min}(a,b)} \;(-1)^{v}(v-w)!(v+w)_{w}(a+b-2v)^{w}_{v}\frac{x^{b-v}}{(b-v)!} \frac{y^{v-w}}{(v-w)!}\frac{z^{a-v}}{(a-v)!}\frac{h^{v+w}}{(v+w)!} .$$

\end{enumerate}

}
\end{thm}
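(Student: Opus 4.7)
My plan is to prove each of the three parts by directly unfolding Definition \ref{fp} and exploiting the rigidity of divided power functors: $W^k/k!$ evaluates to $1$ only on those tuples whose $W$-coordinate has cardinality $k$ and whose other three coordinates are empty. When these rigid functors are fed into $F \star G$, almost every one of the thirteen blocks $x_1, x_2, y_1, y_2, y_3, z_1, z_2, h_1, \ldots, h_6$ appearing in Definition \ref{fp} is forced to be empty, so that the enormous sum collapses to a very small expression.

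For part (1), taking $F = Y^a/a!$ and $G = X^b/b!$, the vanishing conditions on the two functor arguments $F(x_1, y_1 \sqcup h_5, z_1 \sqcup h_3, h_1)$ and $G(x_2 \sqcup h_3, y_2 \sqcup h_6, z_2, h_2)$ force $x_1, z_1, h_3, h_1, y_2, h_6, z_2, h_2$ all to be empty; the linking condition $|y_3| + |h_4| = |h_3| = 0$ then kills $y_3$ and $h_4$ as well. What remains is $x = x_2$ (with $|x| = b$), $y = y_1$ and $h = h_5$ (with $|y| + |h| = a$), and the only surviving factor of $\mathbb{M}$ is $[h_5, x_2 \sqcup x_2]$. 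Applying the valuation and using $|[h, x \sqcup x]| = (2|x|)^{|h|}$ yields Proposition \ref{bd}(2). Part (2) is strictly parallel under the colour permutation: only the factor $[h_6, z_1 \sqcup z_1]$ survives, giving $(2|z|)^{|h|}$ and Proposition \ref{bd}(1).

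Part (3) is the substantive case. With $F = Z^a/a!$ and $G = X^b/b!$, the vanishing constraints now kill only $x_1, y_1, y_2, z_2, h_1, h_2, h_5, h_6$, leaving $z_1, x_2, y_3, h_3, h_4$ free subject to $|z_1| + |h_3| = a$, $|x_2| + |h_3| = b$ and $|y_3| + |h_4| = |h_3|$. Renaming $v := h_3$, $w := h_4$ and identifying $x = x_2$, $z = z_1$, $y = y_3$, the surviving $\mathbb{M}$-factors collapse to $\mathbb{L}(y) \otimes \mathbb{L}(w) \otimes (x \sqcup z)_v^w$, and the sign $(-1)^{|h_3|} = (-1)^{|v|}$ is carried through. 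This produces exactly the claimed closed form for the functor $F \star G$.

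To read off the generating series in part (3) the cleanest route is to invoke Theorem \ref{im}: since $|Z^a/a!| = z^a/a!$ and $|X^b/b!| = x^b/b!$, the identity $|F \star G| = |F| \star |G|$ reduces the statement to Proposition \ref{bd}(3), which is already established. Alternatively one may sum cardinalities by hand: each fixed shape contributes $(-1)^{v}(v-w)!\,w!\,(a+b-2v)^{w}_{v}$, which is then multiplied by the multinomial $\binom{v+w}{v,w}$ coming from the partition $h = h_3 \sqcup h_4$, and the identity $\binom{v+w}{w}\,w! = (v+w)_w$ converts this product into the precise prefactor appearing in Proposition \ref{bd}(3). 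I expect the main obstacle to be the careful bookkeeping of the thirteen indexing blocks and the compatibility $|y_3| + |h_4| = |h_3|$, but once the correct blocks are pinned down as empty the remaining computation is purely mechanical.
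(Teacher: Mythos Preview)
Your proposal is correct and follows essentially the same approach as the paper's own proof: both arguments unfold Definition \ref{fp}, use the rigidity of the divided power functors to force all but a few of the thirteen blocks to be empty, and then read off the surviving factors of $\mathbb{M}$ (namely $[h_5,x_2\sqcup x_2]$ in part (1), $[h_6,z_1\sqcup z_1]$ in part (2), and $\mathbb{L}(y_3)\otimes\mathbb{L}(h_4)\otimes(z_1\sqcup x_2)_{y_3\sqcup h_4}^{\,\cdot}$ in part (3)). Your write-up is in fact more explicit than the paper's in tracking which blocks vanish and why, and your remark that the generating-series identities follow either from Theorem \ref{im} together with Proposition \ref{bd} or from a direct cardinality count is a welcome clarification.
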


\begin{proof} \ The reader should have Figures \ \ref{f8}, \ \ref{f7}, \ \ref{f9} \ in mind as we go along the proof. \\

\noindent 1. Clearly in this case the only non-empty block in the partition of $h$ is $h_5.$  Therefore
we have that $0\leq |h|=|h_5|\leq a.$ If $|h|=k,$ then we still need $a-k$ elements of color $y$, and $b$ elements of color $x$.
The map from $h_5$ to $[b] \sqcup [b]$ gives rise to the factor $(2b)^k.$ \\

\noindent 2.  Similarly in this case the only non-empty block in the partition of $h$ is $h_6.$  Therefore
we have that $0\leq |h|=|h_6|\leq b.$ If $|h|=k,$ then we still need $b-k$ elements of color $y$, and $a$ elements of color $x$.
The map from $h_6$ to $[a] \sqcup [a]$ gives rise to the factor $(2a)^k.$\\

\noindent 3. In this case $h$ is partitioned in two blocks $h_3$ and $h_4$ with $0 \leq |h_4| \leq |h_3| \leq \mathrm{min}(a,b).$
Set $w=|h_4|$ and $v=|h_3|.$ So we have that $|y_3|=v-w.$ Thus we need an additional set with $a-v$ elements with color $z$, and
another set with $b-v$ elements of colored $x$. Therefore $ \ (z_1+x_2)_{y_3+h_4}^{h_4}  \ $ gives rise to a factor of $\ (a-v + b-v)_{v-w+w}^{w} = (a+ b-2v)_{v}^{w}.\ $
\begin{figure}[t]
\centering
\begin{tabular}{|p{8cm}|p{8cm}|p{8cm}|}\hline
\includegraphics[width=8cm]{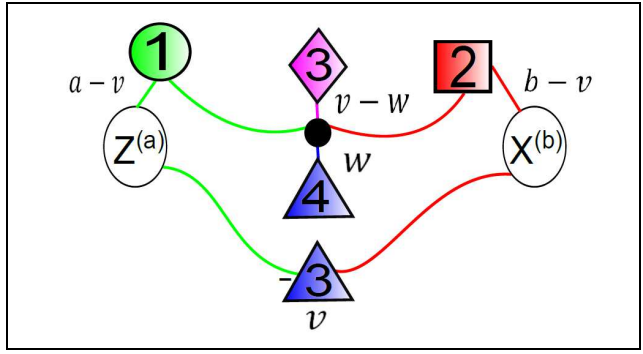}
\\\hline
\end{tabular}
\caption{ Graphical interpretation of the $\star$-product functor $\ \frac{Z^{a}}{a!}\star\frac{X^{b}}{b!}.$}\label{f9}
\end{figure}

\end{proof}

\subsection{Graphs and the  $\star$-product of the exponentiated variables}

In this final subsection we study with graphical methods the $\star$-product of the exponentiated variables in $\ \widehat{U}_{h}(\mathfrak{sl}_2). \ $ First we recall the combinatorial meaning of the exponentiated variables. The functor $E^X,$  similar constructions applied for the other variables, is given on a $4$-tuple of finite sets $\ (a,b,c,d) \in \mathbb{B}^4\ $ by
$$E^X(a,b,c,d) \ = \  \left\{
\begin{array}{cc}
1 & \mathrm{if} \ b=c=d=\emptyset,\\
0 & \  \mbox{otherwise}.
\end{array}\right. $$

We have that $\ |E^X|=e^x,\ $ and we  can similarly define functors $\ E^Y,\ E^Z \ $ and $\ E^H\ $ such that
$\ |E^Y|=e^y, \ |E^Z|=e^z\ $ and $\ |E^H|=e^h$.

\begin{thm}{\em Consider the category $\ ([ \mathbb{B}^4, C] ,\star).$ We have that:

\begin{enumerate}
  \item  The functor $\ E^Y\star E^X \ $ is such that $\ E^Y\star E^X(x,y,z,h) = \emptyset\ $ if $\ z \neq \emptyset, $ and otherwise it is given by
  $$E^Y\star E^X(x,y,z,h)  \ = \  [h,x \sqcup x],$$ and therefore
$$ e^y\star e^x \ = \ \big|E^Y\star E^X \big| \ = \ e^{xe^{2h}}e^y.$$

  \item  The functor $\ E^Z\star E^Y \ $ is such that $\ E^Z\star E^Y(x,y,z,h) = \emptyset\ $ if $\ y \neq \emptyset, \ $ and otherwise it is given by
  $$E^Z\star E^Y(x,y,z,h)  \ = \  [h,z \sqcup z],$$ and therefore
$$ e^z\star e^y \ = \ \big|E^Z\star E^Y \big| \ = \ e^xe^{ze^{2h}}.$$

\item The functor $\ E^Z\star E^X \ $ is given by  $$E^Z\star E^X(x,y,z,h) \ = \  \underset{w \sqcup v=h}{\bigoplus}(-1)^{|v|}\mathbb{L}(w)\otimes \mathbb{L}(y)\otimes (x \sqcup z)_{v}^{w}, $$     where in the sum above the identity $\ |y| + |w| =  |v| \ $ should hold.
    Therefore  $$ e^z\star e^x  \ = \ \big|E^Z\star E^X \big| \ = $$
$$ \sum_{a, c, w \leq v \in \mathbb{\mathbb{N}}} \ (-1)^{v}(v-w)!(v+w)_{w}(a+c)^{w}_{v}\ \frac{x^{a}}{a!} \frac{y^{v-w}}{(v-w)!}\frac{z^{c}}{c!}\frac{h^{v+w}}{(v+w)!} .$$

\end{enumerate}

}
\end{thm}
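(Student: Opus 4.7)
The plan is to reduce the statement to Theorem~\ref{bb} by decomposing each exponential functor into its homogeneous pieces. As functors $\mathbb{B}^4\to C$ with pairwise disjoint support,
$$E^X \ = \ \bigoplus_{b\geq 0}\frac{X^b}{b!}, \qquad E^Y \ = \ \bigoplus_{a\geq 0}\frac{Y^a}{a!}, \qquad E^Z \ = \ \bigoplus_{a\geq 0}\frac{Z^a}{a!}.$$
Since $F\star G$ is assembled in Definition~\ref{fp} solely from $\oplus$'s and $\otimes$'s, and $C$ is distributive, the $\star$-product commutes with these direct sums. Hence $E^Y\star E^X=\bigoplus_{a,b\geq 0}\tfrac{Y^a}{a!}\star\tfrac{X^b}{b!}$, and similarly for the other two products, which reduces everything to a term-by-term application of Theorem~\ref{bb}.

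For item 1, I would evaluate a summand at $(x,y,z,h)$: by Theorem~\ref{bb}(1) it vanishes unless $z=\emptyset$, $|x|=b$ and $|y|+|h|=a$, and these three conditions pin $(a,b)$ down uniquely and return the value $[h,x\sqcup x]$. The generating series then follows by collecting coefficients,
$$|E^Y\star E^X| \ = \ \sum_{a,b,k\geq 0}(2a)^k\frac{x^a y^b h^k}{a!\,b!\,k!} \ = \ e^y\sum_{a\geq 0}\frac{(xe^{2h})^a}{a!} \ = \ e^{xe^{2h}}e^y,$$
with item 2 handled identically via Theorem~\ref{bb}(2). For item 3 the same reduction is in force, but now each homogeneous summand itself carries an inner sum over $0\leq w\leq v\leq\min(a,b)$. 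The key observation will be that, viewed from the evaluation side, a decomposition $h=w\sqcup v$ with $|y|+|w|=|v|$ forces $a=|z|+|v|$ and $b=|x|+|v|$, so the double indexing collapses into a single sum over such decompositions, and the bounds $v,w\leq\min(a,b)$ are automatic. The series identity then follows by taking valuations via Theorem~\ref{im}, or equivalently by summing Theorem~\ref{bb}(3) over $a,b$.

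The principal obstacle I anticipate is bookkeeping in item 3: checking that no side condition is dropped when the double sum collapses, and matching the tensor factors $\mathbb{L}(w)\otimes\mathbb{L}(y)\otimes (x\sqcup z)_v^w$ and the sign $(-1)^{|v|}$ with their scalar counterparts $(-1)^v\,(v-w)!\,(v+w)_w\,(a+c)_v^w$ in the generating series. Once the distributivity of $\star$ over $\oplus$ is in hand, everything else amounts to a routine unpacking of the definitions and of Theorem~\ref{bb}.
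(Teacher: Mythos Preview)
Your proposal is correct and follows essentially the same route as the paper. The paper phrases the reduction slightly differently---it re-runs the graphical analysis of Theorem~\ref{bb} with $E^X,E^Y,E^Z$ in place of the divided powers, noting that this simply lifts the cardinality constraints---whereas you decompose each exponential as $\bigoplus_n \tfrac{X^n}{n!}$ and invoke distributivity of $\star$ over $\oplus$ to reduce to Theorem~\ref{bb} term by term; the resulting computations (including the closed form $e^{xe^{2h}}e^y$ for item~1) are identical.
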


\begin{proof}
 The proof is similar to that of Theorem \ref{bb}. Again the reader should have Figures \ \ref{f8}, \ \ref{f7}, \ \ref{f9} \ in mind, but replacing the application of divided powers functors by the applications of  the corresponding exponentiated variables functors. Thus most of the  restrictions on the cardinality of sets are lifted. Item 3 follows then directly. Let us show item 1. From the previous considerations we have that:
$$e^y\star e^x \ = \ |E^Y\star E^X| \ = \ \sum_{a,b,c \in \mathbb{N}}(2a)^c\frac{x^a}{a!}\frac{y^b}{b!}\frac{h^c}{c!} \ = \
\sum_{a,c \in \mathbb{N}}\frac{x^a}{a!}\frac{(2ah)^c}{c!} e^y \ = $$
$$\sum_{a \in \mathbb{N}}\frac{x^a}{a!}e^{2ah} e^y  \ = \  \sum_{a \in \mathbb{N}}\frac{(xe^{2h})^a}{a!} e^y \ = \ e^{xe^{2h}}e^y.$$
\end{proof}

\section*{Acknowledgment}
E. Salamanca was partially supported by a "Young Researcher"  $-$  COLCIENCIAS grant.

\

\

\noindent ragadiaz@gmail.com \\
\noindent Departamento de Matem\'aticas, Pontificia Universidad Javeriana, Bogot\'a, Colombia\\

\

\noindent julisala14@gmail.com \\
\noindent FaMAF-CIEM (CONICET), Universidad Nacional de C\'ordoba, C\'ordoba, Argentina\\

\end{document}